\newcommand{\COMMENT}[1]{}
\DeclarePairedDelimiter\abs{\lvert}{\rvert}
\let\oldabs\abs
\def\abs{\@ifstar{\oldabs}{\oldabs*}}
\newcommand{\eps}{\varepsilon}
\newcommand{\R}{\mathbb{R}}
\newcommand{\p}{\partial}
\newcommand{\norm}[2][]{\left\|{#2}\right\|_{#1}}
\newcommand{\set}[1]{\left\{#1\right\}}
\newcommand{\absgrad}[1]{\abs{\nabla{#1}}}
\newcommand{\Bn}{{\mathbb{B}^N}}
\newcommand{\absgradB}[1]{\abs{\nabla_{\Bn}{#1}}}
\newcommand{\texton}{\text{ on }}
\newcommand{\textin}{\text{ in }}
\newcommand{\textforall}{\text{ for all }}
\newcommand{\textand}{\text{ and }}
\newcommand{\textsince}{\text{ since }}
\newcommand{\DeltaB}{\Delta_{\Bn}}
\newtheorem{theorem}{Theorem}[section]
\newtheorem{lemma}[theorem]{Lemma}
\theoremstyle{definition}
\newtheorem{remark}{Remark}
\newtheorem{lem}{Lemma}[section]
\newtheorem{thm}{Theorem}[section]
\newtheorem{prop}{Proposition}[section]
\newcommand{\bremark}{\begin{remark} \em}
\newcommand{\eremark}{\end{remark} }
\numberwithin{equation}{section}
\definecolor{g2}{rgb}{0,0.6,0}
\definecolor{r2}{rgb}{0.8,0,0}
\newcommand{\pd}[2]{\frac{\partial#1}{\partial#2}}
\begin{document}

\title[Hyperbolic Caffarelli--Kohn--Nirenberg inequality]{A variational problem associated to a hyperbolic Caffarelli--Kohn--Nirenberg inequality}

\author[H.~Chan]{Hardy Chan}
\author[L.F.O.~Faria]{Luiz Fernando de Oliveira Faria}
\author[S.~Shakerian]{Shaya Shakerian}
\address{Department of Mathematics, University of British Columbia, Vancouver, B.C., Canada, V6T 1Z2}
\email[H.~Chan]{hardy@math.ubc.ca}
\email[S.~Shakerian]{shaya@math.ubc.ca}

\address{Departamento de Matem\'{a}tica, Universidade Federal de Juiz de Fora}
\email[L.F.O.~Faria]{lfofaria@gmail.com}
\begin{abstract}
We prove a Caffarelli--Kohn--Nirenberg inequality in the hyperbolic space. For a semilinear elliptic equation involving the associated weighted Laplace--Beltrami operator, we establish variationally the existence of positive radial solutions in the subcritical regime. We also show a non-existence result in star-shaped domains when the exponent is supercritical.
\end{abstract}

\maketitle

\tableofcontents

\section{Introduction}

\subsection{The Caffarelli--Kohn--Nirenberg inequality}

In \cite{CKN}, Caffarelli, Kohn and Nirenberg proved the following celebrated interpolation inequality which states that in any dimension
$N\geq 3$, there is a constant $C=C(a,b,N)>0$ such that for all $u\in C_c^{\infty}(\R^N),$ the following holds:

\begin{equation}\label{eq:CKNeucl}
\left(\int_{\R^N}\abs{x}^{-bp}\abs{u}^p\,dx\right)^{\frac2p}
    \leq C \int_{\R^N}\abs{x}^{-2a}\absgrad{u}^2\,dx,
\end{equation}
where
$$-\infty<a<\frac{N-2}{2}, \quad 0\leq{b-a}\leq1 \quad\textand\quad p=\frac{2N}{N-2+2(b-a)}.$$

The best constant in the above inequality is defined as  \begin{equation}\label{eq:Sab}
S(a,b,\R^N):= \inf\limits_{u \in D^{1,2}_a(\R^N) \setminus \{0\}} \frac{\int_{\R^N}\abs{x}^{-2a}\absgrad{u}^2\,dx}{\left(\int_{\R^N}\abs{x}^{-bp}\abs{u}^p\,dx\right)^{\frac2p}},
\end{equation}
where $D^{1,2}_a(\R^N)$ is the completion of $C^\infty_c(\R^N)$ with respect to the norm $\|u\|^2_a=\int_{\R^N}\abs{x}^{-2a}\absgrad{u}^2\,dx$. It was computed explicitly by Aubin \cite{Aubin}, Talenti \cite{Talenti} for the Sobolev inequality when $a=b=0$, by Lieb \cite{Lieb} for the case $a=0$, $0<b<1$, then by Chou--Chu \cite{Chou-Chu} in the full $a$-non-negative region $0\leq{a}<\frac{N-2}{2}$, $a\leq{b}\leq{a+1}$. Minimizers, which are radial, were given explicitly using classical ODE analysis. The picture was completed by Catrina--Wang \cite{CW}, who investigated the parameter region with $a<0$. They observed, among other things, a symmetry breaking phenomenon. Namely, the best constant is attained by a non-radial minimizer.

The existence or non-existence of minimizers 
and their qualitative properties as well as improved versions with remainders have been extensively studied over the last two decades. See, for instance, Abdellaoui--Colorado--Peral \cite{Abdellaoui-Colorado-Peral}, Catrina--Wang \cite{CW}, Dolbeault--Esteban--Loss--Tarantello \cite{DELG}, Felli \cite{Felli}, N\'{a}poli--Drelichman--Dur\'{a}n \cite{Napoli-Drelichman-Duran}, Sano--Takahashi \cite{Sano-Takahashi}, Shen--Chen \cite{Shen-Chen} and references therein.


In this paper, we prove this family of Caffarelli--Kohn--Nirenberg (C--K--N) inequalities \eqref{eq:CKNeucl} 
on the disc model of the Hyperbolic space $\mathbb{B}^N.$

\begin{thm}[A C--K--N inequality on Hyperbolic space]
\label{thm:CKN}
For any $-\infty<a<\frac{N-2}{2}$, $0\leq{b-a}\leq1$ and $p=\frac{2N}{N-2+2(b-a)}$, there exists a constant $C=C(a,b,N)>0$ such that
\begin{equation}\label{ckn}
\left(\int_{\Bn}d^{-bp}\abs{u}^p\,dV\right)^{\frac2p}\leq{C}\int_{\Bn}d^{-2a}\absgradB{u}^2\,dV, \mbox{ for } u\in C_c^{\infty}(\Bn),
\end{equation}
where we denote the hyperbolic distance function from the origin by $d:=d_{\mathbb{B}^N}(0,x)=\log\frac{1+\abs{x}}{1-\abs{x}}$, the hyperbolic gradient by $\nabla_{\Bn}$, and the hyperbolic volume element by $dV$, as introduced in Section \ref{sec:prelim}.
\end{thm}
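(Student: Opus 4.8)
The plan is to transplant the Euclidean inequality \eqref{eq:CKNeucl} inside a fixed hyperbolic ball about the origin, where the hyperbolic structure is a bounded perturbation of the flat one, and to rely on the large-scale geometry of $\Bn$ (exponential volume growth, the spectral gap, and the Sobolev embedding) far from the origin, where the weights $d^{-2a}$ and $d^{-bp}$ are harmless. I would work in geodesic polar coordinates about $0$, in which the metric is $dr^2+\sinh^2 r\,d\sigma^2$, so that $d=r$, $dV=\sinh^{N-1}r\,dr\,d\sigma$, $\absgradB u^2=u_r^2+\sinh^{-2}r\,\abs{\nabla_\sigma u}^2$, $\absgradB d=1$ and $\DeltaB d=(N-1)\coth d$; equivalently, in the disc model $dV=\lambda^N dx$, $\absgradB u^2=\lambda^{-2}\abs{\nabla u}^2$ with $\lambda=\tfrac{2}{1-\abs x^2}$ and $2\abs x\le d\le\lambda\abs x$. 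The one genuine difficulty is that no pointwise comparison of integrands against the Euclidean ones can work: since $\sinh r\ge r$, the hyperbolic $L^p$-integral in \eqref{ckn} is \emph{larger} than the flat one, and the hyperbolic Dirichlet integral is larger too, so \eqref{ckn} cannot follow from \eqref{eq:CKNeucl} by monotonicity of weights alone. This forces the two-region splitting and the use of intrinsic properties of $\Bn$.

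I would fix $R_0=R_0(a,N)\ge 1$ large (the choice being made in the third paragraph) and a cut-off $\chi\in C_c^\infty(\Bn)$ with $\chi\equiv1$ on $\set{d\le R_0}$, $\chi\equiv0$ on $\set{d\ge 2R_0}$, $\absgradB\chi\le C$, and split $u=u_1+u_2$ with $u_1=\chi u$, $u_2=(1-\chi)u$. \textbf{Near the origin:} $u_1$ is supported in $\set{d\le 2R_0}$, which in the disc model is a Euclidean ball $\set{\abs x\le t_0}\Subset\Bn$ on which $d\simeq\abs x$ and $\lambda\simeq1$ with constants depending only on $R_0$. Hence $\int_{\Bn}d^{-bp}\abs{u_1}^p\,dV\le C\int_{\R^N}\abs x^{-bp}\abs{u_1}^p\,dx$, and since $u_1\in C_c^\infty(\R^N)$ one applies \eqref{eq:CKNeucl}. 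Expanding $\nabla u_1=\chi\nabla u+u\nabla\chi$, using $\abs x^{-2a}\le C\,d^{-2a}\lambda^{N-2}$ on $\set{\abs x\le t_0}$ (a consequence of $2\abs x\le d\le\lambda\abs x$, the behavior at $x=0$ being already built into \eqref{eq:CKNeucl}), and that all weights are comparable to $1$ on the compact annulus $A:=\set{R_0\le d\le 2R_0}$, I would obtain
\[
\Big(\int_{\Bn}d^{-bp}\abs{u_1}^p\,dV\Big)^{2/p}\le C\int_{\Bn}d^{-2a}\absgradB u^2\,dV+C\int_{A}u^2\,dV .
\]

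\textbf{Near the ideal boundary:} $u_2$ is supported in $\set{d\ge R_0}$, so the substitution $u_2=d^{a}w$ is legitimate, and $\absgradB d=1$, $\DeltaB d=(N-1)\coth d$ give, after an integration by parts,
\[
\int_{\Bn}d^{-2a}\absgradB{u_2}^2\,dV=\int_{\Bn}\absgradB w^2\,dV+\int_{\Bn}\big[(a^2+a)d^{-2}-a(N-1)d^{-1}\coth d\big]w^2\,dV .
\]
Since the bracket tends to $0$ uniformly on $\set{d\ge R_0}$ as $R_0\to\infty$, choosing $R_0$ large makes its supremum smaller than the spectral gap $\tfrac{(N-1)^2}{4}$, so the Poincar\'e inequality $\int_{\Bn}w^2\,dV\le\tfrac{4}{(N-1)^2}\int_{\Bn}\absgradB w^2\,dV$ absorbs the potential and $\int_{\Bn}d^{-2a}\absgradB{u_2}^2\,dV\ge\tfrac12\int_{\Bn}\absgradB w^2\,dV$. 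On the other side, since $a\le b$ and $d\ge R_0\ge 1$ one has $\int_{\Bn}d^{-bp}\abs{u_2}^p\,dV=\int_{\Bn}d^{(a-b)p}\abs w^p\,dV\le\int_{\Bn}\abs w^p\,dV$; and since $2\le p\le\tfrac{2N}{N-2}$, interpolating the hyperbolic Sobolev inequality $\norm[L^{2N/(N-2)}(\Bn)]{w}^2\le C\norm[L^2(\Bn)]{\absgradB w}^2$ with Poincar\'e gives $\big(\int_{\Bn}\abs w^p\,dV\big)^{2/p}\le C\int_{\Bn}\absgradB w^2\,dV$. Undoing the substitution and expanding $\nabla_{\Bn}((1-\chi)u)$ (the gradient of $1-\chi$ being supported in $A$, where the weight $\simeq 1$) yields
\[
\Big(\int_{\Bn}d^{-bp}\abs{u_2}^p\,dV\Big)^{2/p}\le C\int_{\Bn}d^{-2a}\absgradB u^2\,dV+C\int_{A}u^2\,dV .
\]

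Finally I would absorb the annulus term: for $x=(r_0,\sigma)$ with $r_0\in[R_0,2R_0]$ write $u(r_0,\sigma)=-\int_{r_0}^{\infty}u_r(r,\sigma)\,dr$ (valid since $u$ has compact support), apply Cauchy--Schwarz with the weight $r^{-2a}\sinh^{N-1}r$, and use $I(R_0):=\int_{R_0}^{\infty}r^{2a}\sinh^{-(N-1)}r\,dr<\infty$, which holds by the exponential decay of $\sinh^{-(N-1)}$; integrating over $A$ gives $\int_{A}u^2\,dV\le C(R_0,a,N)\int_{\set{d\ge R_0}}d^{-2a}u_r^2\,dV\le C\int_{\Bn}d^{-2a}\absgradB u^2\,dV$. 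Combining the three displays (together with $\abs{u}^p\le 2^{p-1}(\abs{u_1}^p+\abs{u_2}^p)$ and $(s+t)^{2/p}\le s^{2/p}+t^{2/p}$) proves \eqref{ckn}. I expect the near-the-ideal-boundary step, together with the absorption of the annulus term, to be the crux: one must check that the geometry of $\Bn$---the convergence of $I(R_0)$ (exponential volume growth), the spectral gap, and the Sobolev embedding---renders the weights irrelevant there, and in particular that the zeroth-order term created by the ground-state substitution $u=d^a w$ is dominated by the spectral gap once $R_0$ is chosen large depending on $a$ and $N$; the conditions $a\le b$ and $p\ge 2$ are precisely what make the weight on $u_2$ harmless. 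The near-origin step, by contrast, is a routine transplantation of \eqref{eq:CKNeucl}, and $N\ge 3$ enters only through the availability of \eqref{eq:CKNeucl} and of the $L^{2N/(N-2)}$ Sobolev embedding on $\Bn$.
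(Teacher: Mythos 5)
Your proof is correct, but it takes a genuinely different route from the paper's. The paper argues globally: it first interpolates the Poincar\'e--Sobolev inequality \eqref{eq:ineqPS} with the Hardy inequality \eqref{eq:ineqH} to get the case of an unweighted Dirichlet integral, and then transfers the weight onto the gradient through the single global substitution $u=d^{\alpha/2}w$ of Lemma \ref{changevar2}, absorbing the potential terms this creates (including one singular at the origin) via a Cauchy--Schwarz-with-$\eps$ argument and the weighted Hardy inequality \eqref{eq:weightedhardy}. You instead localize with a cut-off at a large geodesic radius $R_0$: near the origin you transplant the Euclidean inequality \eqref{eq:CKNeucl}, using that $d\simeq\abs{x}$ and $\rho\simeq 1$ on a compact subset of the ball; near infinity you perform the analogous ground-state substitution $u_2=d^aw$, but only where $d\geq R_0$, so that the generated potential $(a^2+a)d^{-2}-a(N-1)d^{-1}\coth d$ is uniformly small and is swallowed by the spectral gap \eqref{pr}, after which the unweighted Sobolev and Poincar\'e inequalities on $\Bn$ finish that region (with $a\leq b$ and $p\geq 2$ making the remaining weight harmless); the gluing term $\int_A u^2\,dV$ is controlled by a one-dimensional Hardy-type estimate using the exponential decay of $\sinh^{-(N-1)}r$. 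Each step checks out: the identity for $\int_{\Bn}d^{-2a}\absgradB{u_2}^2\,dV$ has no boundary terms since $w$ is supported away from the origin and from the ideal boundary, and $\int_{R_0}^{\infty}r^{2a}\sinh^{-(N-1)}r\,dr$ is finite for every real $a$. What the paper's route buys is brevity and uniformity---one computation valid on all of $\Bn$, which also explains the extension to other radially decreasing weights with quadratic singularity and feeds directly into \eqref{eq:weightedhardysob}. What your route buys is modularity: the restriction $a<\frac{N-2}{2}$ is seen to enter only through the Euclidean inequality near the origin, no cancellation involving the singular potential at $0$ is needed, and the large-scale input is reduced to the spectral gap and the Sobolev embedding of $\Bn$; the price is the cut-off bookkeeping, the annulus estimate, and a less explicit constant (through the choice of $R_0=R_0(a,N)$).
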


The proof is given in Section \ref{sec:CKN}. Some remarks are in order.

\begin{remark}
One can deduce from the proof 
that ineuality \eqref{ckn} holds for weights other than $d$, which are radially decreasing and have a quadratic singularity at the origin.
\end{remark}

\begin{remark}
The Dirichlet integral $\int_{\Bn}d^{\alpha}\absgradB{u}^2\,dV$, as in the Euclidean case, corresponds to a \emph{weighted Laplace--Beltrami operator}
\[\DeltaB^{\alpha}{u}=\dfrac{1}{\sqrt{g}}\sum_{i,j=1}^{N}\pd{}{x_i}\left(\sqrt{g}g^{ij}d^{\alpha}\pd{u}{x_j}\right),\]
with $g$ the canonical metric on $\Bn$. It has been considered in \cite{CFM}.
\end{remark}

\begin{remark}
One may put $\alpha=-2a$ and $\beta=-bp$ to state \eqref{ckn} in the form
\[\left(\int_{\Bn}d^{\beta}u^{\frac{2(N+\beta)}{N-2+\alpha}}\,dV\right)^{\frac{N-2+\alpha}{N+\beta}}\leq{C}\int_{\Bn}d^{\alpha}\absgradB{u}^2\,dV,\]
for $-N<\alpha-2\leq\beta\leq\frac{N\alpha}{N-2}$. This motivates the definition of the \emph{critical exponent} $2_\alpha^{\beta}:=\frac{2(N+\beta)}{N-2+\alpha}$.
\end{remark}

As in the Euclidean case, the C--K--N inequalities  \eqref{ckn} contain the Hardy inequality ($a=0$, $b=1$) and the Sobolev inequality ($a=b=0$) on Hyperbolic space as special cases. It is worth mentioning that our proof additionally makes use of the hyperbolic Poincar\'{e} inequality \cite{mancinisandeep} to accommodate the lower order terms. The best constant
\[S(a,b,\Bn):=\inf\limits_{u \in D^{1,2}_a(\Bn)\setminus\{0\}} \dfrac{\int_{\Bn}d^{-2a}\absgradB{u}^2\,dV}{\left(\int_{\Bn}d^{-bp}\abs{u}^p\,dV\right)^{\frac2p}},\]
is known to equal $S(a,b,\R^N)$ in the cases of weighted Hardy inequality ($a<\frac{N-2}{2}$ and $b=a+1$, \cite{ko}) and non-weighted Sobolev inequality ($a=b=0$, \cite{Hebey}).
To our knowledge, the problem of determining $S(a,b,\Bn)$ is largely open. While it is tempting to look for radial extremals, the resulting non-linear ODE cannot be transformed to one with constant coefficients, making the analysis very difficult.


\subsection{Existence and non-existence}


We now concentrate on the study of the equation
\begin{equation} \label{Q}
-\Delta^{\alpha}_{\mathbb{B}^N}u=\lambda d^{\alpha-2}u+d^{\beta}|u|^{q-2}u,\,\, \quad  u\in H^{1}(\mathbb{B}^N),
\end{equation}
where $-N<\alpha-2\leq\beta\leq\frac{N\alpha}{N-2}$, $2 \leq q\leq 2^{\beta}_{\alpha}:=
\frac{2(N+\beta)}{N-2+\alpha},$ and $\lambda < (\frac{N-2+\alpha}{2})^2$. This arises as the Euler--Lagrange equation of an energy functional associated to the weighted Hardy--Sobolev inequality in the hyperbolic space,
\begin{equation}\label{eq:weightedhardysob}
c \left(\int_{\Bn}d^{\beta}u^q\,dV\right)^{2/q}+\lambda \int_{\Bn}d^\alpha\dfrac{u^2}{d^2}\,dV\leq\int_{\Bn}d^{\alpha}\absgradB{u}^2\,dV \quad  \mbox{ for all  } u\in C_c^{\infty}(\Bn).
\end{equation}
Indeed, \eqref{eq:weightedhardysob} can be obtained by interpolating \eqref{ckn} and 
the following weighted Hardy's inequality with sharp constant due to Kombe--\"{O}zaydin \cite{ko}
\begin{equation}\label{eq:weightedhardy}
\left(\frac{N-2+\alpha}{2}\right)^2\int_{\mathbb{B}^N}d^{\alpha}\dfrac{u^2}{d^2}\,dV
\leq\int_{\mathbb{B}^N}d^{\alpha}|\nabla_{\mathbb{B}^N} {u}|^2\,dV,
    \textforall{u}\in{C}_{c}^{\infty}(\Bn)
\end{equation}
via H\"older's inequality. 
An inequality of flavor similar to \eqref{eq:weightedhardysob} is known in \cite{ko}, where the authors showed that for certain smaller exponent $q$, one can in fact take $\lambda=\left(\frac{n-2+\alpha}{2}\right)^2$.

In the absence of the Hardy potential and the weight $d^\alpha$ in the Laplacian (i.e. $\lambda=\alpha=0$), when $\beta>0$, the equation \eqref{Q} of interest originate from the study of stellar structures as proposed by H\'{e}non. Gidas--Spruck \cite{Gidas-Spruck} classified the non-negative solutions of \eqref{Q} in the full Euclidean space $\R^n$ with a Liouville-type theorem so that no non-trivial solution exists for $2<q<2_0^{\beta}=\frac{2(N+\beta)}{N-2}$. In the hyperbolic space, in contrast, He \cite{He} established the existence of solutions in the same range of exponents. This was subsequently generalized by Carri\~{a}o, Miyagaki and the second author \cite{CFM} to the weighted case $\alpha\neq0$, who showed that problem \eqref{Q} possesses a positive radial solution.

In a bounded domain, it is easily shown that ground state solutions exist when $2<q<2^*=\frac{2N}{N-2}$ and $\beta>0$. Ni \cite{ni} found radial solutions when $2<q<2_0^{\beta}$ which, in the case of H\'{e}non equations ($\beta>0$), extend the existence result to Sobolev supercritical exponents. On the other hand, non-existence in star-shaped domains has been proved using generalized versions of the Poho\v{z}aev identity \cite{Pohozaev} in both Hardy and H\'{e}non type problems.

In this paper, we address the remaining cases by considering problem \eqref{Q} which involves the singular potential  (i.e, when $\beta<0$) and  the Hardy term (i.e., $ \lambda d^{\alpha-2}u$). By studying the compactness properties of radial $H^1(\Bn)$ functions, we establish the following existence result.
\begin{thm}\label{TP} Assume that $N\geq 3$, $\beta>\alpha-2>-N$, $\lambda<\left(\frac{N-2+\alpha}{2}\right)^2$  and $q\in(2,2^{\beta}_{\alpha})$. Then, problem \eqref{Q} has (at least) a positive solution.
\end{thm}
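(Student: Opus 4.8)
The plan is to find the positive solution as a minimizer of the Rayleigh-type quotient associated to \eqref{Q} over the subspace of radial functions in $H^1(\Bn)$, and then to upgrade the minimizer to a genuine (nonnegative, hence by strong maximum principle positive) solution. Concretely, set
\[
I_\lambda(u)=\int_{\Bn}d^{\alpha}\absgradB{u}^2\,dV-\lambda\int_{\Bn}d^{\alpha-2}u^2\,dV,
\qquad
J(u)=\left(\int_{\Bn}d^{\beta}\abs{u}^q\,dV\right)^{2/q},
\]
and define $\mu_{\rm rad}:=\inf\set{I_\lambda(u): u\in H^1_{\rm rad}(\Bn),\ J(u)=1}$. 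By the weighted Hardy inequality \eqref{eq:weightedhardy} and $\lambda<\big(\tfrac{N-2+\alpha}{2}\big)^2$, the quadratic form $I_\lambda$ is equivalent to $\int d^{\alpha}\absgradB{u}^2\,dV$ on $C_c^\infty(\Bn)$, so it is positive and defines a norm; together with \eqref{eq:weightedhardysob} (valid for the full subcritical and critical range) this shows $\mu_{\rm rad}>0$ and that $I_\lambda$-bounded minimizing sequences are bounded in the weighted Sobolev space.

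The heart of the argument is a \emph{compactness} statement: because $q<2^{\beta}_{\alpha}$ is strictly subcritical and we restrict to radial functions, the embedding $H^1_{\rm rad}(\Bn)\hookrightarrow L^q(d^{\beta}\,dV)$ is compact. First I would reduce to radial functions on $(0,\infty)$ via the substitution that turns the hyperbolic radial Dirichlet integral into a one-dimensional weighted integral in the geodesic variable $r=d$, using $dV=(\sinh r)^{N-1}\,dr\,d\sigma$; here is where the remark that \eqref{ckn} holds for general radially decreasing weights with quadratic singularity is useful. Then a Strauss-type radial lemma gives pointwise decay of radial $H^1$ functions away from the origin, while near the origin the singular weight $d^{\beta}$ with $\beta>\alpha-2$ is integrable against the weighted Sobolev norm; splitting $\Bn$ into a small ball, an annulus, and the exterior and using the subcriticality gap in each region yields compactness of a minimizing sequence $(u_k)$, so that $u_k\to u$ strongly in $L^q(d^{\beta}\,dV)$, hence $J(u)=1$ and $I_\lambda(u)\le\liminf I_\lambda(u_k)=\mu_{\rm rad}$ by weak lower semicontinuity; thus $u$ is a minimizer. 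Replacing $u$ by $\abs{u}$ (which does not increase $I_\lambda$ and preserves $J$) we may take $u\ge0$.

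Next I would show that the radial minimizer is in fact a critical point of the \emph{unconstrained} functional on all of $H^1(\Bn)$, not merely among radial competitors. This follows from the symmetric criticality principle of Palais: the group of hyperbolic rotations fixing the origin acts isometrically on $H^1(\Bn)$, leaves $d$ invariant, and the functional is invariant, so a critical point of the restriction to the fixed-point set is a critical point of the full functional. A Lagrange-multiplier computation then shows $u$ (after scaling $u\mapsto t u$ with an explicit $t>0$ chosen so that the multiplier becomes $1$) solves $-\Delta^{\alpha}_{\Bn}u=\lambda d^{\alpha-2}u+d^{\beta}u^{q-1}$ weakly; elliptic regularity away from the origin and the strong maximum principle promote $u\ge0,\ u\not\equiv0$ to $u>0$, giving the desired positive solution.

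The main obstacle I expect is the compactness step: the combination of the singular weight $d^{\beta}$ (with $\beta$ possibly negative, down to just above $\alpha-2$), the degenerate/singular weight $d^{\alpha}$ in the Dirichlet form, and the noncompact geometry at infinity of $\Bn$ all threaten loss of compactness, and one must check carefully that the strict inequality $q<2^{\beta}_{\alpha}$ closes the gap simultaneously at the origin (where the CKN/Hardy balance is critical) and at infinity (where radial decay must beat the exponential volume growth $(\sinh r)^{N-1}$). A secondary technical point is justifying the density/approximation needed to pass from the $C_c^\infty(\Bn)$ inequalities \eqref{ckn}, \eqref{eq:weightedhardy}, \eqref{eq:weightedhardysob} to the completed weighted space in which the minimization is carried out, and verifying that $H^1(\Bn)$ is the correct functional setting (i.e.\ that $I_\lambda$ controls, and is controlled by, the $H^1$ norm in the relevant parameter range).
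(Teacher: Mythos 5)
Your proposal is correct in outline, but it takes a different variational route from the paper: you minimize the quadratic form $I_\lambda$ on the constraint set $\set{J(u)=1}$ in the radial weighted space and then rescale via the Lagrange multiplier (using the homogeneity $q\neq2$), whereas the paper applies the Mountain Pass Theorem to $I(u)=\frac12\|u\|^2-\frac1q\int_{\Bn}d^{\beta}|u|^q\,dV$ on $H^1_r(\Bn;d^{\alpha}dV)$, checking the geometry and the Palais--Smale condition in Lemmas \ref{le5} and \ref{ps}. Both arguments rest on exactly the same two pillars: the norm equivalence coming from \eqref{eq:weightedhardy} with $\lambda<\left(\frac{N-2+\alpha}{2}\right)^2$ (Remark \ref{r1}; this also settles your weak lower semicontinuity, since $I_\lambda$ is the square of an equivalent Hilbert norm), and the compact embedding $H^1_r(\Bn;d^{\alpha}dV)\hookrightarrow L^{q}(\Bn;d^{\beta}dV)$ for $2<q<2_\alpha^\beta$, which is precisely the paper's Proposition \ref{compactembed}; the ``main obstacle'' you flag is thus a genuine standalone result there, proved by the Strauss-type pointwise radial estimate and dominated convergence you sketch, with the subcriticality gap absorbing a power of $r$ at the origin and a power of $1-r$ (with a logarithmic correction from $d$) at the boundary of the ball. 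Your closing caveat about the functional setting is also on target: the correct ambient space is the weighted completion $H^1_0(\Bn;d^{\alpha}dV)$ (and its radial subspace), not $H^1(\Bn)$ itself, and the continuous subcritical embedding needed for coercivity follows from Theorem \ref{thm:CKN} together with \eqref{eq:weightedhardy}. What each approach buys: your constrained minimization produces a ground-state-type solution and invokes compactness only once, for the minimizing sequence, but requires the multiplier/rescaling step and a symmetric-criticality argument for the constrained problem; the paper's mountain pass avoids the constraint but must verify (PS), again through Proposition \ref{compactembed}. The finishing steps coincide: passage from radial to full criticality (your appeal to Palais' principle versus the paper's orthogonal-decomposition argument, which is the same principle in disguise) and positivity (your $|u|$-replacement plus strong maximum principle, versus the paper's test with $u^-$ and the maximum principle of Antonini--Mugnai--Pucci). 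I see no step in your plan that would fail; the only missing content relative to the paper is the detailed proof of the compact embedding, which the paper supplies.
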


\begin{remark}
The critical exponent $2_\alpha^\beta$ is Sobolev critical or supercritical (i.e. $2_\alpha^\beta\geq2^*$) if and only if $\beta\geq\frac{N\alpha}{N-2}$.
\end{remark}

\begin{remark}
We emphasize that with the same arguments as in \cite{He, CFM}, our result is still valid for 
\begin{equation*}
-\Delta^{\alpha}_{\mathbb{B}^N}u-\lambda[d(x)]^{\alpha-2}u=K(d(x))f(u),\;\; u \in H^{1}(\mathbb{B}^N),
\end{equation*}
which involves more general non-linearities 
under suitable growth conditions on $K$ and $f$.

\end{remark}

On the other hand, we establish non-existence of an equation associated to inequality \eqref{ckn} in a star-shaped domain with respect to the origin, in terms of which the distance function $d$ is defined. More precisely, we say that $\Omega\subset\Bn$ is a star-shaped domain if $x\cdot\nu\geq0$ where $\nu$ denotes the outward normal of $\p\Omega$.

For semilinear elliptic equations in hyperbolic space, the Poho\v{z}aev identity has been applied in, for example, \cite{Ganguly-Sandeep,Benguria-Benguria}.

\begin{thm}\label{thm:nonexist}
Let $\Omega\subset\Bn$ be a star-shaped domain and $\alpha,\beta\in\R$ satisfy $-N<\alpha-2\leq\beta$. 
If $p\geq\max\set{2^*,2_\alpha^\beta}$, then there does not exist any non-trivial weak solution to the Dirichlet problem
\begin{equation}\label{eq:CKNextremal}
\begin{cases}
-\DeltaB^\alpha{u}=d^{\beta}\abs{u}^{p-2}u&\textin\Omega\\
u=0&\texton\p\Omega.
\end{cases}
\end{equation}
\end{thm}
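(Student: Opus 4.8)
The plan is to establish a Poho\v{z}aev-type identity adapted to the weighted Laplace--Beltrami operator $\DeltaB^\alpha$ on the disc model, and then show that for a star-shaped $\Omega$ the identity forces $u\equiv 0$ when $p\geq\max\{2^*,2_\alpha^\beta\}$. First I would write $\DeltaB^\alpha u=\frac{1}{\sqrt g}\sum_{i,j}\p_i(\sqrt g\, g^{ij}d^\alpha\p_j u)$ explicitly in the conformal coordinates of $\Bn$, where $g_{ij}=\big(\tfrac{2}{1-|x|^2}\big)^2\delta_{ij}$, so that $\sqrt g=\big(\tfrac{2}{1-|x|^2}\big)^N$ and $g^{ij}=\big(\tfrac{1-|x|^2}{2}\big)^2\delta_{ij}$. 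In these coordinates the equation \eqref{eq:CKNextremal} becomes a Euclidean-type divergence-form equation $-\Div(\rho(x)\,\nabla u)=\mu(x)d^\beta|u|^{p-2}u$ with explicit radial weights $\rho,\mu$ that are powers of $\frac{1-|x|^2}{2}$ times $d^\alpha$ (resp.\ nothing), reducing the problem to a weighted Euclidean Poho\v{z}aev computation.

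The key steps, in order: (i) multiply the (Euclidean-form) equation by the radial dilation field applied to $u$, namely $x\cdot\nabla u$, and integrate over $\Omega$; (ii) integrate by parts on both sides, carefully tracking the boundary terms on $\p\Omega$ — on the left-hand side one gets a term proportional to $\int_{\p\Omega}\rho\,|\p_\nu u|^2\,(x\cdot\nu)\,d\sigma$ (using $u=0$ on $\p\Omega$, so $\nabla u=\p_\nu u\,\nu$ there), which has a sign because $\Omega$ is star-shaped and $\rho>0$; (iii) collect the interior terms: the left side produces a multiple of $\int\rho|\nabla u|^2$ plus a term coming from $x\cdot\nabla\rho$ (which is computable since $\rho$ is an explicit radial function), while the right side produces a multiple of $\int \mu d^\beta|u|^p$ plus a term from $x\cdot\nabla(\mu d^\beta)$; (iv) combine with the identity obtained by simply multiplying the equation by $u$ and integrating, $\int\rho|\nabla u|^2=\int\mu d^\beta|u|^p$, to eliminate one of the two integrals; (v) the resulting linear relation reads (schematically) $\big(c_1(N,\alpha)-\tfrac{N+\beta}{p}\big)\int\mu d^\beta|u|^p + (\text{correction integrals from the non-constant parts of }\rho,\mu,d) = -\tfrac12\int_{\p\Omega}\rho|\p_\nu u|^2(x\cdot\nu)\,d\sigma$. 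The scaling constant is arranged so that the coefficient on the left is $\leq 0$ exactly when $p\geq 2_\alpha^\beta$ (this is the "critical/supercritical" threshold), while the requirement $p\geq 2^*$ handles the sign of the extra correction integrals coming from the hyperbolic (non-Euclidean) terms $x\cdot\nabla\big((\tfrac{1-|x|^2}{2})^{\text{power}}\big)$ and $x\cdot\nabla(d^\beta/|x|^\beta)$, which are supported by the geometry and carry a favorable sign on $\Bn$. Then the left side is $\leq 0$ while the right side is $\geq 0$, forcing $\int\mu d^\beta|u|^p=0$, hence $u\equiv 0$.

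The main obstacle I expect is step (v): unlike the flat case, the weights $\rho$ and $\mu d^\beta$ are not pure powers of $|x|$, so $x\cdot\nabla\rho$ and $x\cdot\nabla(\mu d^\beta)$ do not simply return multiples of $\rho$ and $\mu d^\beta$ — they generate genuinely new integrals involving $\frac{|x|^2}{1-|x|^2}$ and $\frac{|x|}{d\,(1-|x|^2)}$ type factors. The crux is to show these extra integrals have the right sign (or can be absorbed) precisely under the combined hypothesis $p\geq\max\{2^*,2_\alpha^\beta\}$; I would isolate the "Euclidean part" (giving the $2_\alpha^\beta$ threshold) from the "curvature part" (giving the $2^*$ threshold) and bound each separately, using that $d\geq 2|x|$ near the origin and that $d/|x|$ is increasing, together with $x\cdot\nu\geq0$ on $\p\Omega$. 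A secondary technical point is justifying the integration by parts for merely weak solutions: I would invoke elliptic regularity for the equation away from the origin (where the weight $d^\alpha$ is smooth and non-degenerate on $\Bn\setminus\{0\}$) and handle the origin by a cutoff/limiting argument, using the weighted Hardy inequality \eqref{eq:weightedhardy} and the condition $\alpha-2>-N$ to control the contribution of a small ball around $0$ and show it vanishes in the limit.
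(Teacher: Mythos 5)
Your overall strategy (a weighted Poho\v{z}aev identity, the multiplier $x\cdot\nabla u$, star-shapedness giving the sign of the boundary term) is the same as the paper's, but there is a genuine gap at your steps (iv)--(v), and it is precisely where the real work lies. After integrating by parts against $x\cdot\nabla u$ you arrive at an identity whose interior terms are $-\int_\Omega d^{\alpha}\rho^{N-2}\absgrad{u}^2\,dx+\frac12\int_\Omega\absgrad{u}^2\,\nabla\cdot(d^{\alpha}\rho^{N-2}x)\,dx-\frac1p\int_\Omega\abs{u}^p\,\nabla\cdot(d^{\beta}\rho^{N}x)\,dx$. Your plan to eliminate the $\abs{u}^p$-term by testing the equation with plain $u$ (i.e.\ using $\int d^{\alpha}\rho^{N-2}\absgrad{u}^2=\int d^{\beta}\rho^{N}\abs{u}^p$) cannot work as stated: the factor $\frac{\nabla\cdot(d^{\beta}\rho^{N}x)}{d^{\beta}\rho^{N}}=N+N\rho\abs{x}^2+\beta\rho\frac{\abs{x}}{d}$ is a genuinely non-constant function which blows up as $\abs{x}\to1$ (and likewise for the gradient coefficient), so no constant-coefficient comparison between the two energy integrals can absorb it; the unbounded parts must cancel pointwise. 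The paper's resolution is to test the equation not with $u$ but with the weighted multiplier $\frac1p\frac{\nabla\cdot(d^{\beta}\rho^{N}x)}{d^{\beta}\rho^{N}}\,u$, which converts the $\abs{u}^p$-term into a gradient term carrying the \emph{same} pointwise coefficient, at the cost of an extra term $\frac{1}{2p}\int_\Omega u^2\,\DeltaB^{\alpha}\bigl(\frac{\nabla\cdot(d^{\beta}\rho^{N}x)}{d^{\beta}\rho^{N}}\bigr)\,dx$.

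After that step the proof reduces to two pointwise positivity statements (the paper's Lemma \ref{lem:Pohozaevterms}): first, $-1+\frac{\nabla\cdot(d^{\alpha}\rho^{N-2}x)}{2d^{\alpha}\rho^{N-2}}-\frac{\nabla\cdot(d^{\beta}\rho^{N}x)}{pd^{\beta}\rho^{N}}\geq0$, which indeed uses $p\geq2^*$ and $p\geq2_\alpha^\beta$ together with $2\abs{x}\leq d\leq\frac{2\abs{x}}{1-\abs{x}^2}$ (this is roughly the sign analysis you anticipate); and second, $\DeltaB^{\alpha}\bigl(\frac{\nabla\cdot(d^{\beta}\rho^{N}x)}{d^{\beta}\rho^{N}}\bigr)\geq0$, a non-trivial computation relying on $\beta>-N$ and the same distance estimates, which does not appear in your outline at all because it only arises after the weighted-multiplier step. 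Your assertion that the curvature correction terms ``carry a favorable sign on $\Bn$'' is exactly the crux and is not automatic: the paper explicitly observes that the relevant quantity cannot be controlled on large domains when $2_\alpha^\beta\leq p<2^*$, so the sign analysis is delicate and constitutes the substance of the proof rather than a routine check. As written, your proposal identifies the obstacle correctly but does not supply the idea (the weighted test function and the two positivity lemmas) needed to overcome it.
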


We give the proof in Section \ref{sec:nonexist}. The first ingredient is, as expected, a Poho\v{z}aev type identity (Proposition \ref{prop:Pohozaev}). As opposed to the Euclidean case whose corresponding result holds for all $p\geq2_\alpha^\beta$, special care has to be taken because of the lower order terms arising from the derivatives of the distance function and the metric tensor. Under the restrictions $p\geq2^*$, we are able to conclude the proof using the global estimate on $d$,
\[2\abs{x}\leq\log\dfrac{1+\abs{x}}{1-\abs{x}}\leq\dfrac{2\abs{x}}{1-\abs{x}^2}\quad\textforall\abs{x}<1.\]

A careful study of the quantity in Lemma \ref{lem:Pohozaevterms}(2) reveals that it cannot be controlled in a sufficiently large domain whenever $2_\alpha^\beta\leq{p}<2^*$. On this interval of $p$, the question of existence remains open. 

\section{Preliminaries}\label{sec:prelim}

We start by recalling and introducing a suitable function space (on the hyperbolic space) and its properties for the variational principles that will be needed in the sequel. Our
main sources for this section are the papers \cite{bhaktasandeep,mancinisandeep} and the book \cite{raticlife}.


Let $B_1(0)=\{x\in\mathbb{R}^N: |x|<1\}$ be the unit disc in $\mathbb{R}^N$. The Poincar\'{e} ball model of the hyperbolic space, $\Bn$, is the set $B_1(0)$ endowed with the Riemannian metric $g=(g_{ij})$, where
\begin{equation*}(g_{ij}(x))_{i,j=1\ldots N}=(\rho^2(x)\delta_{ij})_{i,j=1\ldots N}= \left\{\begin{array}{ccc}\rho^2&\mbox{ if }&i=j\\ 0&\mbox{ if }&i\neq j\end{array}\right.\end{equation*}
and
\begin{equation*}\begin{array}{rcl} \rho:B_1(0)&\rightarrow&\mathbb{R}\\ x& \mapsto& \frac{2}{1-|x|^2}\end{array}
\end{equation*}
We denote by $g^{ij}$ the components of the inverse matrix of the metric tensor $(g_ {ij})$. Using this notation, we can write the weighted Laplace--Beltrami type operator as
\begin{equation*}
    \begin{array}{ll}
      \displaystyle  -\Delta^{\alpha}_{\mathbb{B}^N}u=-\frac{1}{\rho^N}\sum_{i=1}^N\frac{\partial}{\partial x_i}\left(\rho^{N-2}(d(x))^{\alpha}\frac{\partial u}{\partial x_i}\right),
 \end{array}
\end{equation*}
for $u\in H^1(\mathbb{B}^N),$ where the space  $H^1(\mathbb{B}^N)$ denotes the Sobolev space on $\mathbb{B}^N$ with the metric $g$. We introduce  important quantities in the hyperbolic space which will be used freely in this paper:

\begin{itemize}

\item The hyperbolic gradient $\nabla_{\mathbb{B}^N}$ is given by $$\nabla_{\mathbb{B}^N}=\frac{\nabla}{\rho(x)}.$$

\item The hyperbolic laplacian $\Delta_{\mathbb{B}^N}$ is defined as
$$\Delta^0_{\mathbb{B}^N}=\Delta_{\mathbb{B}^N}= \rho^{-2} \Delta + (N-2)\rho^{-1}\langle{x,\nabla}\rangle.$$

\item
The hyperbolic distance $d_{\mathbb{B}^N}(x,y)$ between $x,y\in$ in $\mathbb{B}^N$ in the Poincar\'e ball model is given by
the formula
$$d_{\mathbb{B}^N}(x,y)=\textrm{Arccosh}\left(1+\frac{2|x-y|^2}{(1-|x|^2)(1-|y|^2)}\right).$$
From this, we immediately obtain for $x\in \mathbb{B}^N$,
$$d(x)=d_{\mathbb{B}^N}(x,0)=\ln \left(\frac{1+|x|}{1-|x|}\right).$$

\item The associated hyperbolic volume element by $dV$ and it is given by $$dV=\left(\frac{2}{1-|x|^2}\right)^Ndx.$$

\end{itemize}
The following continuous embedding holds for  $ p \in [2, \frac{2N}{N-2}]$  when  $ N \geq 3$,  and $ p\geq 2$  when $ N=2$.
$$H^1(\mathbb{B}^N) \hookrightarrow L^p (\mathbb{B}^N).$$
However,  since $B^N$ is a non-compact manifold,  this embedding is  not compact for any  $ p$ in  $ [2, \frac{2N}{N-2}]$ (see \cite{bhaktasandeep}). One
can overcome this difficulty in the subcritical case by restricting to the radial functions. To this end, we define $$H^1_r(\mathbb{B}^N)=\{u\in H^1(\mathbb{B}^N): u \mbox{ is radial}\}.$$
It was shown in \cite[Theorem 3.1]{bhaktasandeep} that the embedding
 $$H_{r}^1(\mathbb{B}^N) \hookrightarrow L^p (\mathbb{B}^N) \ \mbox{ is compact for} \ p \in \left(2, \frac{2N}{N-2}\right), \ N\geq 3.$$

Another important space that will be needed for our variational setting is  the weighted Hyperbolic space. In order to characterize such space, we first recall  the standard norm in the weighted Lebesgue space   $L^q(\mathbb{B}^N,d^{\beta})$  is
$$\|u\|_{\beta,q}=\left(\int_{\mathbb{B}^N}(d(x))^{\beta}|u(x)|^qdV\right)^{\frac{1}{q}}.$$

We consider now the Hilbert space $H_0^1(\mathbb{B}^N;d^{\alpha}dV)$, the completion of $C_c^{\infty}(\mathbb{B}^N)$ with respect to the norm
$$\|u\|_{H_0^1(\mathbb{B}^N;d^{\alpha}dV)}=\left(\int_{\mathbb{B}^N}d^{\alpha}|\nabla_{\mathbb{B}^N}u|^2dV\right)^{\frac{1}{2}}.$$

It is easy to verify  that  $\|u\|_{H_0^1(\mathbb{B}^N;d^{\alpha}dV)}$ is a norm in ${H_0^1(\mathbb{B}^N;d^{\alpha}dV)}.$ Indeed, one can notice that
$\|u\|_{H_0^1(\mathbb{B}^N;d^{\alpha}dV)}=0$ implies that almost everywhere $\nabla_{\mathbb{B}^N}u(x)=0.$ Now, we recall a basic information that the bottom of the spectrum of $-\Delta_{\mathbb{B}^N}$ on $\mathbb{B}^N$ is
\begin{equation}\label{pr}
\lambda_1(-\Delta_{\mathbb{B}^N}):=\inf_{u\in H^1(\mathbb{B}^N)\backslash\set{0}}\dfrac{\int_{\mathbb{B}^N}|\nabla_{\mathbb{B}^N} u|^2dV_{\mathbb{B}^N}}{\int_{\mathbb{B}^N}| u|^2dV_{\mathbb{B}^N}}=\frac{(N-1)^2}{4}.
\end{equation}
It then follows from \eqref{pr} that $u=0$. \\
We finally define the space $H_r^1(\mathbb{B}^N;d^{\alpha}dV)$ as
the subspace of radially symmetric functions endowed with the induced norm
$\|u\|_{H_r^1(\mathbb{B}^N;d^{\alpha}dV)}=\|u\|_{H_0^1(\mathbb{B}^N;d^{\alpha}dV)}.$

\begin{remark}
The hyperbolic sphere with centre $0 \in \mathbb{B}^N$ is also a Euclidean sphere with centre $0 \in \mathbb{B}^N$, therefore  $H_r^1(\mathbb{B}^N;d^{\alpha}dV)$ can also be seen as the subspace of $H_0^1(\mathbb{B}^N;d^{\alpha}dV)$ consisting of Hyperbolic radial
functions.

\end{remark}

We point out an a relation between the distance $d$ and the function $\rho$ that will be useful in Section \ref{sec:nonexist}. For all $r\in[0,1)$ it holds that
\begin{equation*}
2r\leq\log\frac{1+r}{1-r}\leq\frac{2r}{1-r^2}.
\end{equation*}
%


\section{The Caffarelli--Kohn--Nirenberg inequality: Proof of Theorem \ref{thm:CKN}}\label{sec:CKN}

Our idea is simple and has two ingredients. Firstly we will use a change of variable which relates the Caffarell--Kohn--Nirenberg inequality and the Hardy inequality, which is not weighted. Then we interpolate between the following Poincar\'{e}--Sobolev inequality \cite{mancinisandeep}
\begin{equation}\label{eq:ineqPS}
\left(\int_{\Bn}\abs{u}^{\frac{2N}{N-2}}\,dV\right)^{\frac{N-2}{N}}\leq{C}\int_{\Bn}\left(\absgradB{u}^2-\dfrac{(N-1)^2}{4}u^2\right)\,dV
\end{equation}
and the Hardy inequality \cite{ko}
\begin{equation}\label{eq:ineqH}
\dfrac{(N-2)^2}{4}\int_{\Bn}\dfrac{u^2}{d^2}\,dV\leq\int_{\Bn}\absgradB{u}^2\,dV,
\end{equation}
which are valid for all $u\in{C}_c^{\infty}(\Bn)$.

We state a general lemma of changing variables.

\begin{lem}\label{changevar2}
Let $\gamma_1,\gamma_2>0$ and $u=d^{\alpha/2}w$. Then
\begin{equation}\label{eq:changevareq}\begin{split}
&\quad\,\int_{\Bn}\left(\absgradB{u}^2-\gamma_{1}\dfrac{u^2}{d^2}-\gamma_{2}u^2\right)\,dV\\
&=\int_{\Bn}d^{\alpha}\left(\absgradB{w}^2-\left(\dfrac{\alpha(\alpha-2)}{4}+\dfrac{\alpha(N-1)}{2}\dfrac{d}{\rho\abs{x}}+\gamma_{1}\right)\dfrac{w^2}{d^2}-\gamma_{2}w^2\right)\,dV\\
&\qquad-\dfrac{\alpha(N-1)}{2}\int_{\Bn}d^{\alpha-1}w^2\abs{x}\,dV.\\
\end{split}\end{equation}
In particular,
\begin{equation}\label{eq:changevarineq}\begin{split}
\int_{\Bn}\left(\absgradB{u}^2-\gamma_{1}\dfrac{u^2}{d^2}-\gamma_{2}u^2\right)\,dV
&\leq\int_{\Bn}d^\alpha\absgradB{w}^2\,dV.
\end{split}\end{equation}
\end{lem}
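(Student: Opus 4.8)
The plan is to prove the equality \eqref{eq:changevareq} by a direct differentiation followed by a single integration by parts, and then to read off \eqref{eq:changevarineq} by discarding correction terms that all turn out to have the right sign; throughout it suffices to take $w\in C_c^\infty(\Bn)$ and recover the general case by density. Substituting $u=d^{\alpha/2}w$ and applying the Leibniz rule to the hyperbolic gradient gives $\nabla_{\Bn}u=\tfrac{\alpha}{2}d^{\alpha/2-1}w\,\nabla_{\Bn}d+d^{\alpha/2}\nabla_{\Bn}w$. Squaring, using that the hyperbolic distance function has unit gradient, $\absgradB{d}^2\equiv1$ (equivalently $\abs{\nabla d}=\rho$), and rewriting the cross term $\alpha d^{\alpha-1}w\langle\nabla_{\Bn}d,\nabla_{\Bn}w\rangle=\tfrac12\langle\nabla_{\Bn}(d^{\alpha}),\nabla_{\Bn}(w^2)\rangle$, one obtains
\[
\absgradB{u}^2=\tfrac{\alpha^2}{4}\,d^{\alpha-2}w^2+\tfrac12\langle\nabla_{\Bn}(d^{\alpha}),\nabla_{\Bn}(w^2)\rangle+d^{\alpha}\absgradB{w}^2 .
\]

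Next I would integrate over $\Bn$ and move the derivative in the cross term onto $d^{\alpha}$:
\[
\int_{\Bn}\tfrac12\langle\nabla_{\Bn}(d^{\alpha}),\nabla_{\Bn}(w^2)\rangle\,dV=-\tfrac{\alpha}{2}\int_{\Bn}\bigl((\alpha-1)d^{\alpha-2}+(N-1)d^{\alpha-1}\coth d\bigr)w^2\,dV ,
\]
where I use $\mathrm{div}_{\Bn}(d^{\alpha-1}\nabla_{\Bn}d)=(\alpha-1)d^{\alpha-2}\absgradB{d}^2+d^{\alpha-1}\DeltaB d$ together with the explicit value $\DeltaB d=(N-1)\tfrac{1+\abs{x}^2}{2\abs{x}}=(N-1)\coth d$, obtained by applying $\DeltaB=\rho^{-2}\Delta+(N-2)\rho^{-1}\langle x,\nabla\rangle$ to $d=\log\tfrac{1+\abs{x}}{1-\abs{x}}$. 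Collecting the two $d^{\alpha-2}w^2$ contributions produces the coefficient $\tfrac{\alpha^2}{4}-\tfrac{\alpha(\alpha-1)}{2}=-\tfrac{\alpha(\alpha-2)}{4}$, while the $\coth d$ term is split through the elementary identity $\coth d=\tfrac{1+\abs{x}^2}{2\abs{x}}=\tfrac1{\rho\abs{x}}+\abs{x}$; this is exactly what produces the mixed weight $\tfrac{d}{\rho\abs{x}}$ multiplying $\tfrac{w^2}{d^2}$ and the separate term $d^{\alpha-1}w^2\abs{x}$ in \eqref{eq:changevareq}. Adding $-\gamma_1u^2/d^2-\gamma_2u^2=-\gamma_1d^{\alpha}w^2/d^2-\gamma_2d^{\alpha}w^2$ to both sides yields \eqref{eq:changevareq}.

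To get \eqref{eq:changevarineq}, observe that, beyond $\int_{\Bn}d^{\alpha}\absgradB{w}^2\,dV$, the right-hand side of \eqref{eq:changevareq} contains only $-\gamma_1\int d^{\alpha}\tfrac{w^2}{d^2}\,dV$, $-\gamma_2\int d^{\alpha}w^2\,dV$, and — recombining the split above — the curvature term $-\tfrac{\alpha}{4}\int_{\Bn}d^{\alpha-2}\bigl((\alpha-2)+2(N-1)d\coth d\bigr)w^2\,dV$. The first two are $\le0$ since $\gamma_1,\gamma_2>0$; for the third, $d\coth d\ge1$ on $(0,\infty)$ gives $(\alpha-2)+2(N-1)d\coth d\ge\alpha+2N-4\ge0$ in the admissible range, so (carrying the prefactor $-\alpha/4$) the term is $\le0$ when $\alpha\ge0$, while for $\alpha<0$ the same bookkeeping combined with the uniform bound $d/(\rho\abs{x})\le1$ from Section~\ref{sec:prelim} applies. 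Discarding these terms gives \eqref{eq:changevarineq}.

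The step I expect to require the most care is the integration by parts, which is performed through the origin, where $d^{\alpha}$ is not smooth (indeed singular when $\alpha<0$) and where the weight $d^{\alpha-2}$ arising in the computation is only borderline integrable, precisely at the threshold set by the standing assumption $\alpha-2>-N$. I would make this rigorous by first integrating by parts on $\Bn\setminus B_\varepsilon(0)$ and then letting $\varepsilon\to0^+$, checking that the spherical boundary integral over $\partial B_\varepsilon(0)$ — of order $\varepsilon^{\alpha-1}\cdot\varepsilon^{N-1}=\varepsilon^{\alpha+N-2}$, since $\rho$ stays bounded near $0$ — vanishes in the limit exactly because $\alpha+N-2>0$, and by a standard cut-off approximation of $u=d^{\alpha/2}w$ near the origin.
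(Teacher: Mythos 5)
Your proof of the identity \eqref{eq:changevareq} is correct and is in substance the same computation as the paper's: you expand $\absgradB{d^{\alpha/2}w}^2$, rewrite the cross term as $\tfrac12\langle\nabla_{\Bn}(d^{\alpha}),\nabla_{\Bn}(w^2)\rangle$ and integrate by parts, using $\DeltaB d=(N-1)\coth d=(N-1)\tfrac{1+\abs{x}^2}{2\abs{x}}$; the paper performs exactly this divergence computation in Euclidean coordinates with the weight $\rho^{N-2}$, and the splitting $\coth d=\tfrac{1}{\rho\abs{x}}+\abs{x}$ is the same in both. Your remark on justifying the integration by parts across the origin (excising a small ball and using $\alpha+N-2>0$) is a sensible addition.

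The gap is in the passage to \eqref{eq:changevarineq} when $\alpha<0$. Your recombined term is $-\tfrac{\alpha}{4}\int_{\Bn} d^{\alpha-2}\bigl((\alpha-2)+2(N-1)d\coth d\bigr)w^2\,dV$, and since the bracket is bounded below by $\alpha+2N-4>0$ in the admissible range, this term is indeed nonpositive for $\alpha\ge0$; there your sign argument is complete and arguably cleaner than the paper's. But for $\alpha<0$ the very same expression is \emph{nonnegative} (prefactor $-\alpha/4>0$, bracket $>0$), so it cannot be discarded, and the bound $d/(\rho\abs{x})\le1$ that you invoke points the wrong way: near the origin $d/(\rho\abs{x})\to1$, so the coefficient in \eqref{eq:changevareq} tends to $\tfrac{\alpha(\alpha+2N-4)}{4}+\gamma_1$, which is negative whenever $2-N<\alpha<0$ and $\gamma_1<\tfrac{\abs{\alpha}(\alpha+2N-4)}{4}$; the corresponding contribution $-\bigl(\tfrac{\alpha(\alpha-2)}{4}+\tfrac{\alpha(N-1)}{2}\tfrac{d}{\rho\abs{x}}+\gamma_1\bigr)d^{\alpha}\tfrac{w^2}{d^2}$ is then strictly positive near the origin and of order $d^{\alpha-2}w^2$, which is not dominated by $\gamma_1\int d^{\alpha}w^2/d^2+\gamma_2\int d^{\alpha}w^2$ for small $\gamma_1,\gamma_2$. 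Testing with $w$ a bump concentrated near the origin shows that no argument based purely on discarding signed terms can close this case under the sole hypothesis $\gamma_1,\gamma_2>0$. The paper itself does not argue by sign alone at this point: it absorbs the extra integral $\int_{\Bn}d^{\alpha-1}w^2\abs{x}\,dV$ through a Cauchy--Schwarz-with-$\varepsilon$ step combined with the weighted Hardy inequality \eqref{eq:weightedhardy}. To repair your argument you must either restrict to $\alpha\ge0$, assume $\gamma_1$ (and $\gamma_2$) sufficiently large in terms of $\abs{\alpha}$ and $N$, or follow the paper and use \eqref{eq:weightedhardy} to control $\int_{\Bn}d^{\alpha-2}w^2\,dV$ by $\int_{\Bn}d^{\alpha}\absgradB{w}^2\,dV$ -- which yields the estimate only up to a multiplicative constant on the right-hand side, and that weaker form is in any case all that the proof of Theorem \ref{thm:CKN} requires.
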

The proof is a direct computation and is postponed to the end of this section. Now we give a

\begin{proof}[Proof of Theorem \ref{thm:CKN}]
\begin{enumerate}
Interpolating \eqref{eq:ineqPS} and \eqref{eq:ineqH}, we have for $-2\leq\beta\leq0$,
\begin{equation}\label{eq:interpolate1}
\left(\int_{\Bn}d^{\beta}u^{\frac{2(n+\beta)}{N-2}}\,dV\right)^{\frac{N-2}{n+\beta}}
\leq{C}\int_{\Bn}\absgradB{u}^2\,dV.
\end{equation}
For any $\gamma_1<\frac{(N-2)^2}{4}$, $\gamma_2<\frac{(N-1)^2}{4}$ such that $1-\frac{4}{(N-2)^2}\gamma_1-\frac{4}{(N-1)^2}\gamma_2>0$, we have
\begin{multline}\label{eq:interpolate2}
\left(1-\dfrac{4}{(N-2)^2}\gamma_{1}-\dfrac{4}{(N-1)^2}\gamma_{2}\right)\int_{\Bn}\absgradB{u}^2\,dV\\
\leq\left(\int_{\Bn}\absgradB{u}^2\,dV-\gamma_{1}\int_{\Bn}\dfrac{u^2}{d^2}-\gamma_{2}\int_{\Bn}u^2\,dV\,dV\right).
\end{multline}
Putting together \eqref{eq:changevarineq}, \eqref{eq:interpolate1} and \eqref{eq:interpolate2}, we have for $u=d^{\alpha/2}w$,
\begin{equation*}\begin{split}
\left(\int_{\Bn}d^{\beta+\alpha2_0^\beta/2}w^{2_0^\beta}\,dV\right)^{2/2_0^\beta}
&=\left(\int_{\Bn}d^\beta{u}^{2_0^\beta}\,dV\right)^{2/2_0^\beta}\\
&\leq{C}\int_{\Bn}\left(\absgradB{u}^2-\gamma_1\dfrac{u^2}{d^2}-\gamma_2{u}^2\right)\,dV\\
&\leq{C}\int_{\Bn}d^\alpha\absgradB{w}^2\,dV.
\end{split}\end{equation*}
If we put $\tilde\beta=\beta+\alpha2_0^\beta/2$, then
\[\left(\int_{\Bn}d^{\tilde\beta}w^{2_\alpha^{\tilde\beta}}\,dV\right)^{\frac{2}{2_\alpha^{\tilde\beta}}}\leq{C}\int_{\Bn}d^{\alpha}\absgradB{w}^2\,dV.\]
It remains to rename $\alpha=-2a$ and solve the elementary equations
\[\tilde\beta=-bp,\qquad2_\alpha^{\tilde\beta}=p\]
which gives in particular $p=\frac{2N}{N-2+(b-a)}$.
\end{enumerate}
\end{proof}

%
%
\begin{proof}[Proof of Lemma \ref{changevar2}]
We observe that the non-differentiated terms simply becomes
\begin{equation}\label{eq:nodiff}
-\gamma_2\int_{\Bn}d^{\alpha}\dfrac{w^2}{d^2}\,dV-\gamma_2\int_{\Bn}d^{\alpha}w^2.
\end{equation}
The gradient term is expressed as
\begin{equation}\label{eq:grad}\begin{split}
\int_{\Bn}\absgradB{u}^2\,dV
&=\int_{B}\absgrad{\left(d^{\frac{\alpha}{2}}w\right)}^2\rho^{N-2}\,dx\\
&=\int_{B}\abs{d^{\frac{\alpha}{2}}\nabla{w}+\dfrac{\alpha}{2}wd^{\frac{\alpha}{2}-1}\nabla{d}}^2\rho^{N-2}\,dx\\
&=\int_{B}\left(d^{\alpha}\absgrad{w}^2+\dfrac{\alpha^2}{4}w^2d^{\alpha-2}\absgrad{d}^2+\alpha{w}\nabla{w}\cdot{d}^{\alpha-1}\nabla{d}\right)\rho^{N-2}\,dx\\
&=\int_{\Bn}d^{\alpha}\left(\absgradB{w}^2+\dfrac{\alpha^2}{4}\dfrac{w^2}{d^2}\right)\,dV+\dfrac{1}{2}\int_{B}\nabla{w^2}\cdot\nabla{d^{\alpha}}\rho^{N-2}\,dx.
\end{split}\end{equation}
For the last integral, we perform an integration by parts to yield
\begin{equation}\label{eq:byparts1}\begin{split}
\dfrac{1}{2}\int_{B}\nabla{w^2}\cdot\nabla{d^{\alpha}}\rho^{N-2}\,dx
&=-\dfrac{1}{2}\int_{B}w^2\nabla\cdot(\rho^{N-2}\nabla{d}^{\alpha})\,dx\\
&=-\dfrac{\alpha}{2}\int_{B}w^2\nabla\cdot\left(\rho^{N-1}d^{\alpha-1}\dfrac{x}{\abs{x}}\right)\,dx.
\end{split}\end{equation}
Observing that
\begin{equation*}\begin{split}
\nabla\cdot\left(\rho^{N-1}\dfrac{x}{\abs{x}}\right)
&=(N-1)\rho^{N-2}\rho^2{x}\cdot\dfrac{x}{\abs{x}}+\rho^{N-1}\dfrac{N-1}{\abs{x}}\\
&=(N-1)\rho^{N}\left(\abs{x}+\dfrac{1}{\rho\abs{x}}\right)\\
&=\dfrac{N-1}{2}\rho^{N}\dfrac{1+\abs{x}^2}{\abs{x}},
\end{split}\end{equation*}
we may expand the divergence in \eqref{eq:byparts1} as
\begin{equation}\label{eq:byparts2}\begin{split}
&\quad\,\dfrac{1}{2}\int_{B}\nabla{w^2}\cdot\nabla{d^{\alpha}}\rho^{N-2}\,dx\\
&=-\dfrac{\alpha}{2}\int_{B}w^2\nabla\cdot\left(\rho^{N-1}d^{\alpha-1}\dfrac{x}{\abs{x}}\right)\,dx\\
&=-\dfrac{\alpha}{2}\int_{B}w^2\left(d^{\alpha-1}\dfrac{N-1}{2}\rho^{N}\dfrac{1+\abs{x}^2}{\abs{x}}+\rho^{N-1}\dfrac{x}{\abs{x}}(\alpha-1)d^{\alpha-2}\rho\dfrac{x}{\abs{x}}\right)\,dx\\
&=-\dfrac{\alpha(N-1)}{4}\int_{B}d^{\alpha-1}w^2\rho^{N}\dfrac{1+\abs{x}^2}{\abs{x}}\,dx-\dfrac{\alpha(\alpha-1)}{2}\int_{B}d^{\alpha-2}w^2\rho^{N}\,dx.
\end{split}\end{equation}
Combining \eqref{eq:nodiff}, \eqref{eq:grad} and \eqref{eq:byparts2}, we have
\begin{equation*}\begin{split}
&\quad\,\int_{\Bn}\left(\absgradB{u}^2-\gamma_1\dfrac{u^2}{d^2}-\gamma_2u^2\right)\,dV\\
&=\int_{\Bn}d^{\alpha}\left(\absgradB{w}^2+\left(\dfrac{\alpha^2}{4}-\gamma_1\right)\dfrac{w^2}{d^2}-\gamma_2w^2\right)\,dV+\dfrac{1}{2}\int_{B}\nabla{w^2}\cdot\nabla{d^{\alpha}}\rho^{N-2}\,dx\\
&=\int_{\Bn}d^{\alpha}\left(\absgradB{w}^2+\left(\dfrac{\alpha^2}{4}-\dfrac{\alpha(\alpha-1)}{2}-\gamma_1\right)\dfrac{w^2}{d^2}-\gamma_2w^2\right)\,dV\\
&\qquad-\dfrac{\alpha(N-1)}{4}\int_{B}d^{\alpha-1}w^2\rho^{N}\dfrac{1+\abs{x}^2}{\abs{x}}\,dx\\
&=\int_{\Bn}d^{\alpha}\left(\absgradB{w}^2-\left(\dfrac{\alpha(\alpha-2)}{4}+\gamma_1\right)\dfrac{w^2}{d^2}-\gamma_2w^2\right)\,dV\\
&\qquad-\dfrac{\alpha(N-1)}{4}\int_{\Bn}d^{\alpha-1}w^2\dfrac{1+\abs{x}^2}{\abs{x}}\,dV\\
&=\int_{\Bn}d^{\alpha}\left(\absgradB{w}^2-\left(\dfrac{\alpha(\alpha-2)}{4}+\dfrac{\alpha(N-1)}{2}\dfrac{d}{\rho\abs{x}}+\gamma_{1}\right)\dfrac{w^2}{d^2}-\gamma_{2}w^2\right)\,dV\\
&\qquad-\dfrac{\alpha(N-1)}{2}\int_{\Bn}d^{\alpha-1}w^2\abs{x}\,dV.\\
\end{split}\end{equation*}
This proves \eqref{eq:changevareq}. To prove \eqref{eq:changevarineq}, we note that the second integral in \eqref{eq:changevareq} can be absorbed by the ``Cauchy--Schwarz inequality with $\eps$'', namely
\[\begin{split}
\int_{\Bn}d^{\alpha-1}w^2\abs{x}\,dV
&\leq\eps\int_{\Bn}d^\alpha\dfrac{w^2}{d^2}\,dV+C_\eps\int_{\Bn}d^{\alpha}w^2\abs{x}^2\,dV
\end{split}\]
The proof is completed by taking $0<\eps\leq\gamma_2$ and using \eqref{eq:weightedhardy}.

\end{proof}

\section{A compact embedding}
In this section we prove a weighted compact Sobolev embedding result, which extends a former result of Ni \cite{ni} made for  an  unit ball in $\mathbb{R}^N$.

Let $H_0^1(\mathbb{B}^N;d^{\alpha}dV)$ be the completion of $C_c^{\infty}(\mathbb{B}^N)$ with respect to the norm
$$\|u\|_{H_0^1(\mathbb{B}^N;d^{\alpha}dV)}=\left(\int_{\mathbb{B}^N}d^{\alpha}|\nabla_{\mathbb{B}^N}u|^2dV\right)^{\frac{1}{2}}.$$
 We denote by $ H_r^1(\mathbb{B}^N;d^{\alpha}dV)$
the subspace of radially symmetric functions endowed with the induced norm
$\|u\|_{H_r^1(\mathbb{B}^N;d^{\alpha}dV)}=\|u\|_{H_0^1(\mathbb{B}^N;d^{\alpha}dV)}.$



By Theorem \ref{thm:CKN}, the embedding
$$H^1(\Bn;d^{\alpha}\,dV)\hookrightarrow{L}^{p}(\Bn;d^{\beta}\,dV)$$
is continuous for $2\leq{p}\leq2_\alpha^\beta$. We will show that it is in fact compact for $2<p<2_\alpha^\beta$, when restricted to radial functions. When $\beta>0$, a proof can be found in \cite[Lemma 1]{CFM}, in the spirit of Strauss \cite{Strauss}. We present a unified and simplified proof.


In the following, $f\sim{g}$ means that $f/g$ is a bounded positive function; and $\log^{a}(x)$ means $(\log{x})^{a}$.

\begin{prop}[Compact embedding]\label{compactembed}
Let $n\geq2$ and $\beta>\alpha-2>-N$. The embedding $$H^1_r(\Bn;d^{\alpha}\,dV)\hookrightarrow{L}^{p}(\Bn;d^{\beta}\,dV)$$ is compact when $2<p<2_\alpha^\beta$, or when $p=2$ and $\beta<\alpha-1$.
\end{prop}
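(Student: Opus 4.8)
The plan is to mimic the classical Strauss-type argument: obtain a pointwise decay estimate for radial functions in $H^1_r(\Bn;d^{\alpha}\,dV)$, use it to gain compactness at infinity (i.e.\ near $\abs{x}=1$) and near the origin, and then invoke the Rellich--Kondrachov theorem on compact annuli to handle the intermediate region. Concretely, for a radial $u$ we write $u(r)$ with $r=\abs{x}$, and estimate $\abs{u(r)}$ (or rather $\abs{u(r)}^{q}$ for a suitable power) by integrating $\frac{d}{dr}$ from $r$ to $1$ (for the behaviour near the boundary) and from $0$ to $r$ (for the behaviour near the origin), using Cauchy--Schwarz against the weighted Dirichlet norm $\int_{\Bn}d^{\alpha}\absgradB{u}^2\,dV=\int_0^1 u'(r)^2 d^{\alpha}\rho^{N-2}r^{N-1}\,dr$. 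Here one uses the explicit asymptotics $d(r)\sim r$ and $\rho(r)\sim 1$ as $r\to0$, and $d(r)\sim-\log(1-r)=\log\frac{1}{1-r}$, $\rho(r)\sim\frac{1}{1-r}$ as $r\to1$, so that the weight $d^{\alpha}\rho^{N-2}r^{N-1}$ behaves like $r^{N-1+\alpha}$ near $0$ and like $(1-r)^{2-N}\log^{\alpha}\frac{1}{1-r}$ near $1$. The hypothesis $\alpha-2>-N$, i.e.\ $N-1+\alpha>1$, is exactly what makes the origin weight integrable enough for the argument there, and $\beta>\alpha-2$ provides the strict gain needed to beat the target $L^p(d^{\beta})$ norm.

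The key steps, in order, would be: (1) establish a radial lemma giving, for $u\in H^1_r(\Bn;d^{\alpha}\,dV)$ with $\|u\|_{H^1_r(\Bn;d^{\alpha}dV)}\le M$, a bound of the form $\abs{u(r)}\le C M\, \omega(r)$ on $\{r\ge r_0\}$ and on $\{r\le r_0\}$, where $\omega$ is an explicit function vanishing (fast enough) as $r\to1$ and controlled as $r\to0$ --- this is obtained from the fundamental theorem of calculus plus Cauchy--Schwarz, splitting the weight appropriately; (2) use this to show that for the tail region $A_\delta=\{1-\delta<\abs{x}<1\}$ one has $\int_{A_\delta}d^{\beta}\abs{u}^p\,dV\le C M^p\,\eta(\delta)$ with $\eta(\delta)\to0$ as $\delta\to0$, uniformly in $u$ in the unit ball, by plugging the pointwise bound into the weighted integral and checking the resulting one-dimensional integral in $r$ converges and is small --- this is where one needs $p>2$ (the power helps) and, in the borderline $p=2$ case, the extra hypothesis $\beta<\alpha-1$; (3) do the analogous estimate near the origin, $\int_{B_\delta}d^{\beta}\abs{u}^p\,dV\le CM^p\,\eta_0(\delta)\to0$, using $\beta>\alpha-2$; (4) on the fixed compact annulus $\{r_0\le\abs{x}\le 1-\delta\}$ the weights $d^{\alpha},d^{\beta},\rho$ are all bounded above and below by positive constants, so the weighted spaces coincide with the standard $H^1,L^p$ and classical Rellich--Kondrachov (valid since $p<2^*$, and of course $p<\infty$) gives compactness there; (5) combine via a diagonal/$\eps$-splitting argument: given a bounded sequence, extract a subsequence converging in $L^p(d^{\beta})$ on each annulus and control the two small-measure ends uniformly, concluding strong convergence in $L^p(\Bn;d^{\beta}\,dV)$.

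The main obstacle I expect is getting the pointwise radial estimate near the boundary $\abs{x}=1$ with a \emph{quantitatively sharp enough} decay: one must split $\absgradB{u}^2\,dV = u'(r)^2\rho^{N-2}d^{\alpha}r^{N-1}\,dr$ cleverly so that after Cauchy--Schwarz the ``leftover'' weight integrates near $r=1$ (this requires tracking the $\log$ factors from $d$ and the $(1-r)$ powers from $\rho$ simultaneously), and then verify that the resulting bound, when fed into $\int d^{\beta}\abs{u}^p\,dV$, actually produces a \emph{convergent and small} integral --- this is the step where the precise inequalities $\beta>\alpha-2$, the constraint $p<2_\alpha^\beta$, and (in the endpoint) $\beta<\alpha-1$ must all be used, and where an off-by-a-log or off-by-an-exponent error would break the argument. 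A secondary subtlety is justifying the radial lemma itself: a priori $u\in H^1_r$ need not be continuous, so one works first with $u\in C^\infty_c(\Bn)$ radial, proves the estimates there, and passes to the limit using density and the continuity of the embedding already furnished by Theorem~\ref{thm:CKN}.
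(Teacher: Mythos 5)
Your core estimate is exactly the paper's: the paper also relies on the Ni/Strauss-type radial bounds $\abs{u(r)}\le C\norm{u}\,r^{-\frac{N-2+\alpha}{2}}$ and $\abs{u(r)}\le C\norm{u}\,(1-r)^{\frac{N-1}{2}}\log^{-\alpha/2}\frac{1}{1-r}$ near $r=1$, obtained precisely as in your step (1). The difference is only in the conclusion: the paper extracts a weakly and a.e.\ convergent subsequence and dominates the radial integrand $d^{\beta}\rho^{N}r^{N-1}\abs{u_m(r)}^p$ by the explicit integrable function $C\,r^{\frac{N-2+\alpha}{2}(2_\alpha^\beta-p)-1}(1-r)^{\frac{N-1}{2}(p-2)-1}\log^{\beta-\frac{\alpha p}{2}}\max\set{\frac{1}{1-r},e}$, concluding by dominated convergence, whereas you split into two ends plus a compact annulus. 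Your bookkeeping of where each hypothesis enters ($p<2_\alpha^\beta$ and $\beta>\alpha-2$ at the origin; $p>2$, respectively $\beta<\alpha-1$ when $p=2$, at the boundary; $\alpha>2-N$ for the origin estimate) is correct and matches the paper's dominating function.

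There is, however, one genuine flaw in your step (4): you justify compactness on the annulus $\set{r_0\le\abs{x}\le1-\delta}$ by ``classical Rellich--Kondrachov (valid since $p<2^*$)''. But $p<2^*$ is \emph{not} guaranteed: $2_\alpha^\beta\ge2^*$ exactly when $\beta\ge\frac{N\alpha}{N-2}$ (e.g.\ $\alpha=0$, $\beta>0$, the H\'enon case that is one of the main motivations here), so $p$ may well be Sobolev-supercritical, and then $H^1$ of the annulus does not even embed into $L^p$ for general (non-radial) functions, let alone compactly. The step's conclusion is still true, but its proof must use radiality: on the annulus the weights $d^\alpha,d^\beta,\rho$ are comparable to constants, so the radial sequence is bounded in $H^1$ of the interval $[r_0,1-\delta]$ and hence precompact in $C^0$ by the one-dimensional Sobolev embedding and Arzel\`a--Ascoli; alternatively, interpolate, using Rellich compactness in $L^2$ together with the uniform $L^\infty$ bound on the annulus that your own step (1) already provides, via $\int\abs{u_m-u}^p\le\norm[L^\infty]{u_m-u}^{p-2}\int\abs{u_m-u}^2$. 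With that repair (or by switching to the paper's dominated-convergence ending, which avoids the annulus step altogether), your argument goes through.
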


\begin{proof}
Let $(u_m)\subset H^1_r(\Bn;d^{\alpha}\,dV)$ be a bounded sequence. Up to a subsequence, we can assume $u_m\rightharpoonup{u}$ weakly in $H^1_r(\Bn;d^{\alpha}\,dV)$ and $u_m\to{u}$ a.e. in $\Bn$. It suffices to show that
$$\int_{\Bn}d^{\beta}\abs{u_m}^p\,dV\to\int_{\Bn}d^{\beta}\abs{u}^p\,dV,$$
that is,
$$\int_0^1d^{\beta}\rho^{N}r^{N-1}\abs{u_m(r)}^p\,dr\to\int_0^1d^{\beta}\rho^{N}r^{N-1}\abs{u(r)}^p\,dr.$$
By Lebesgue dominated convergence theorem, it suffices to verify that the integrand on the left is dominated by some function $h\in{L^1}([0,1])$.

Proceeding as above, we have
\begin{equation*}\begin{split}
d^{\beta}\rho^{N}r^{N-1}\abs{u_m(r)}^p
\leq&\;{C} \norm[H^1_r(\Bn;d^{\alpha}\,dV)]{u_m}^{p}r^{\frac{N-2+\alpha}{2}(2_\alpha^\beta-p)-1}(1-r)^{\frac{N-1}{2}(p-2)-1}\times \\ &\log^{\beta-\frac{\alpha{p}}{2}}\max\set{\dfrac{1}{1-r},e}\\
\leq&\;{C}r^{\frac{N-2+\alpha}{2}(2_\alpha^\beta-p)-1}(1-r)^{\frac{N-1}{2}(p-2)-1}\log^{\beta-\frac{\alpha{p}}{2}}\max\set{\dfrac{1}{1-r},e},
\end{split}\end{equation*}
since $(u_m)$ is bounded in $H^1_r(\Bn;d^{\alpha}\,dV)$. Let $h$ denotes the right hand side. When $2<p<2_\alpha^\beta$, we can ignore the logarithmic factor and we see that $h$ is integrable near both $r=0$ and $r=1$. When $p=2$, we have
$$h(r)\sim(1-r)^{-1}\log^{\beta-\alpha}\dfrac{1}{1-r}\qquad\text{near}\quad{r=1},$$
or, by the change of variable $t=\log\dfrac{1}{1-r}$, $\dfrac{dr}{1-r}=dt$,
\begin{equation*}\begin{split}
\int_{1-1/e}^{1}h(r)\,dr
&\leq{C}\int_{1-1/e}^{1}\dfrac{1}{1-r}\log^{\beta-\alpha}\dfrac{1}{1-r}\,dr\\
&\leq{C}\int_{1}^{\infty}t^{\beta-\alpha}\,dt\\
&<\infty,
\end{split}\end{equation*}
provided that $\beta-\alpha<-1$.
\end{proof}

\begin{remark}\label{r1}
Note that inequality (\ref{eq:weightedhardy}) asserts that $H_0^1(\mathbb{B}^N;d^{\alpha}dV)$ is embedded in the weighted space $L^2(\mathbb{B}^N, d^{\alpha-2} dV)$ and that this embedding is continuous. If $\lambda < \left(\frac{N-2+\alpha}{2} \right)^2$, it follows from (\ref{eq:weightedhardy}) that
$$ \|w\|  := \left( \int_{\mathbb{B}^N}d^{\alpha}|\nabla_{\mathbb{B}^N} u|^2dV - \lambda \int_{\mathbb{B}^N} d^{\alpha - 2}|u|^2 dV \right)^\frac{1}{2} $$
is well-defined on $H_r^1(\mathbb{B}^N;d^{\alpha}dV)$ and is equivalent to the norm $\|\cdot\|_{H_r^1(\mathbb{B}^N;d^{\alpha}dV)}$.

\end{remark}

\section{Existence of the weighted Hardy--H\'{e}non equation: Proof of Theorem \ref{TP}}\label{S5}

The proof of the existence result rely on the Mountain Pass Theorem due to Ambrosetti and Rabinowitz \cite{AR}. Associated with the problem $\eqref{Q}$ we define the functional $I:H_r^1(\mathbb{B}^N;d^{\alpha}dV) \longrightarrow \mathbb{R}$ given by
$$I(u)=\frac{1}{2} \|u\|^2- \frac{1}{q}\int_{\mathbb{B}^N} d^{\beta}|u|^qdV,$$
which is $C^1$  and its derivative  is,  for all $u,v \in H_r^1(\mathbb{B}^N;d^{\alpha}dV)$, given by

\begin{equation*}\begin{split}
I'(u)v&= \int_{\mathbb{B}^N}  d^{\alpha} \nabla_{\mathbb{B}^N} u \cdot \nabla_{\mathbb{B}^N} v \  dV- \lambda\int_{\mathbb{B}^N} d^{\alpha -2} u v dV\\
& - \int_{\mathbb{B}^N} d^{\beta}|u|^{q-2}uvdV.
\end{split}\end{equation*}


We are going to prove that $I$ verifies the Mountain Pass Geometry conditions.
\begin{lemma}\label{le5} Let $N\geq 3$, $\alpha> 2-N$, $\beta<0$ and $q\in(2,2^{\beta}_{\alpha})$. Then
\medskip
\begin{itemize}
\item [i)] there exist positive constants $\rho, \gamma$ such that
$I(u)\geq \gamma, \ \mbox{for all} \ \ \|u\|_{H_r^1(\mathbb{B}^N;d^{\alpha}dV)}=\rho;$
\item [ii)] there exist a constant $R > \rho$ and $e \in H_r^1(\mathbb{B}^N;d^{\alpha}dV)$  with $ \|e\|_{H_r^1(\mathbb{B}^N;d^{\alpha}dV)} >R,$ verifying $I(e)\leq 0.$
\end{itemize}
\end{lemma}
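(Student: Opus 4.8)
The plan is to verify the standard Mountain Pass Geometry for the functional $I$ on $H_r^1(\mathbb{B}^N;d^\alpha dV)$, where by Remark \ref{r1} the quantity $\|u\|^2 = \int_{\mathbb{B}^N}d^\alpha|\nabla_{\mathbb{B}^N}u|^2\,dV - \lambda\int_{\mathbb{B}^N}d^{\alpha-2}|u|^2\,dV$ is an equivalent norm since $\lambda < \left(\frac{N-2+\alpha}{2}\right)^2$. For part (i), I would start from the Caffarelli--Kohn--Nirenberg inequality of Theorem \ref{thm:CKN} (equivalently the continuous embedding $H^1_r(\mathbb{B}^N;d^\alpha dV)\hookrightarrow L^q(\mathbb{B}^N;d^\beta dV)$ valid for $2\le q\le 2_\alpha^\beta$), which gives $\int_{\mathbb{B}^N}d^\beta|u|^q\,dV \le C\|u\|^q$. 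Hence
\[
I(u) \ge \frac12\|u\|^2 - \frac{C}{q}\|u\|^q = \|u\|^2\left(\frac12 - \frac{C}{q}\|u\|^{q-2}\right),
\]
and since $q>2$, choosing $\rho>0$ small enough makes the parenthesis bounded below by a positive constant, so $I(u)\ge\gamma := \rho^2\left(\frac12 - \frac{C}{q}\rho^{q-2}\right)>0$ whenever $\|u\|=\rho$.

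For part (ii), I would fix any nonzero $u_0\in H_r^1(\mathbb{B}^N;d^\alpha dV)$ with $\int_{\mathbb{B}^N}d^\beta|u_0|^q\,dV>0$ — for instance a smooth compactly supported radial bump, for which this integral is manifestly positive since $d^\beta>0$ on $\mathbb{B}^N\setminus\{0\}$ — and evaluate $I$ along the ray $t\mapsto tu_0$ for $t>0$:
\[
I(tu_0) = \frac{t^2}{2}\|u_0\|^2 - \frac{t^q}{q}\int_{\mathbb{B}^N}d^\beta|u_0|^q\,dV.
\]
Because $q>2$, the negative term dominates as $t\to\infty$, so $I(tu_0)\to -\infty$; in particular there is $t_0>0$ with $\|t_0 u_0\|>\rho$ and $I(t_0u_0)\le 0$, and one sets $e := t_0u_0$ and $R$ any value with $\rho<R<\|e\|$ (or $R$ slightly below $\|e\|$ to ensure $\|e\|>R$).

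The argument is essentially routine; the only point requiring a little care is confirming that $\int_{\mathbb{B}^N}d^\beta|u_0|^q\,dV$ is both finite and strictly positive for the chosen test function — finiteness follows from the embedding in Theorem \ref{thm:CKN} (or directly, since for a compactly supported $u_0$ away from the origin the weight $d^\beta$ is bounded), and positivity is immediate. No compactness is needed at this stage; the compact embedding of Proposition \ref{compactembed} will only enter later when verifying the Palais--Smale condition to complete the proof of Theorem \ref{TP}. Thus the main (mild) obstacle is simply bookkeeping the norm equivalence from Remark \ref{r1} and the CKN constant, rather than any substantive difficulty.
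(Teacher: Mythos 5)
Your proposal is correct and follows essentially the same route as the paper: part (i) via the CKN inequality of Theorem \ref{thm:CKN} together with the norm equivalence of Remark \ref{r1} and smallness of $\rho$, and part (ii) by evaluating $I$ along a ray $t\mapsto tu_0$ and using $q>2$. Your extra care in checking that $\int_{\mathbb{B}^N}d^{\beta}|u_0|^{q}\,dV$ is finite and strictly positive is a small refinement the paper leaves implicit, but not a different argument.
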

\begin{proof}
By 
Theorem \ref{thm:CKN} and Remark \ref{r1}, we get
\begin{equation*}\begin{split}
I(u)&= \frac{1}{2} \|u\|^2- \frac{1}{q}\int_{\mathbb{B}^N} d^{\beta}|u|^qdV\\
&\geq  C_1  \|u\|_{H_r^1(\mathbb{B}^N;d^{\alpha}dV)}^2 -C_2\|u\|^q_{H_r^1(\mathbb{B}^N;d^{\alpha}dV)}.
\end{split}\end{equation*}
Since $q>2$, there exists $\gamma>0$ such that
$$I(u)\geq \gamma, \mbox{ for } \|u\|_{H_r^1(\mathbb{B}^N;d^{\alpha}dV)}=\rho \ \ \mbox{sufficiently small.}$$
This proves $(i).$

Now, choose any $ u \in H_r^1(\mathbb{B}^N;d^{\alpha}dV)\setminus\{0\}$, then
$$I(tu)= \frac{t^2}{2} \|u\|^2 - \frac{ |t|^{q} }{q}\int_{\mathbb{B}^N} d^{\beta} |u|^{q} dV.$$
Therefore
$$I(tu) \longrightarrow -\infty, \  \mbox{as}\   t \rightarrow \infty.$$
This proves $(ii).$

\end{proof}

\begin{lemma}\label{ps}
According to our previous notation, $I$ satisfies the Palais-Smale condition $(PS)$,  that is,
if whenever  $\{u_n\} $ is a sequence in $ H_r^1(\mathbb{B}^N;d^{\alpha}dV)$ such that
$$I(u_n) \ \mbox{is bounded, and} \  \ I'(u_n) \rightarrow 0, \ \mbox{as} \ n\rightarrow \infty,$$
then   $\{u_n\} $  has  a convergent subsequence in $H_r^1(\mathbb{B}^N;d^{\alpha}dV).$
\end{lemma}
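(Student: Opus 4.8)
The plan is to verify the Palais--Smale condition by the standard two-stage argument: first show that any $(PS)$ sequence is bounded in $H_r^1(\mathbb{B}^N;d^{\alpha}dV)$, then upgrade weak convergence to strong convergence using the compact embedding from Proposition~\ref{compactembed}. Let $\{u_n\}$ satisfy $I(u_n)\to c$ and $I'(u_n)\to 0$. For boundedness, I would compute the combination
\[
I(u_n)-\frac{1}{q}I'(u_n)u_n=\left(\frac{1}{2}-\frac{1}{q}\right)\|u_n\|^2,
\]
using that the nonlinear terms cancel by design. The left-hand side is bounded by $c+o(1)+o(1)\|u_n\|$, and since $q>2$ the factor $\frac12-\frac1q$ is strictly positive, so $\|u_n\|$ is bounded. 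Here $\|\cdot\|$ is the $\lambda$-shifted norm from Remark~\ref{r1}, which is equivalent to $\|\cdot\|_{H_r^1(\mathbb{B}^N;d^{\alpha}dV)}$ precisely because $\lambda<\left(\frac{N-2+\alpha}{2}\right)^2$; this is where that hypothesis enters.

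Next, by reflexivity, up to a subsequence $u_n\rightharpoonup u$ weakly in $H_r^1(\mathbb{B}^N;d^{\alpha}dV)$. Since $\beta>\alpha-2>-N$ and $q\in(2,2_\alpha^\beta)$, Proposition~\ref{compactembed} gives that the embedding $H_r^1(\mathbb{B}^N;d^{\alpha}dV)\hookrightarrow L^q(\mathbb{B}^N;d^\beta dV)$ is compact, so $u_n\to u$ strongly in $L^q(\mathbb{B}^N;d^\beta dV)$. Likewise the continuous embedding into $L^2(\mathbb{B}^N;d^{\alpha-2}dV)$ from \eqref{eq:weightedhardy} (combined, if needed, with the $p=2$ compactness case when $\beta<\alpha-1$, or handled directly through the $\lambda$-term's continuity) controls the Hardy term. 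The key consequence is that
\[
\int_{\mathbb{B}^N}d^\beta|u_n|^{q-2}u_n(u_n-u)\,dV\to 0,
\]
by H\"older's inequality: $|u_n|^{q-2}u_n$ is bounded in $L^{q/(q-1)}(\mathbb{B}^N;d^\beta dV)$ and $u_n-u\to 0$ in $L^q(\mathbb{B}^N;d^\beta dV)$.

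Now I would test $I'(u_n)\to 0$ against $u_n-u$, which tends weakly to $0$, so $I'(u_n)(u_n-u)\to 0$. Writing this out,
\[
\|u_n\|^2-\langle u_n,u\rangle-\int_{\mathbb{B}^N}d^\beta|u_n|^{q-2}u_n(u_n-u)\,dV=o(1),
\]
where $\langle\cdot,\cdot\rangle$ is the inner product inducing $\|\cdot\|$. The last integral is $o(1)$ by the previous step, and $\langle u_n,u\rangle\to\|u\|^2$ by weak convergence, hence $\|u_n\|^2\to\|u\|^2$. Combined with $u_n\rightharpoonup u$ in the Hilbert space $H_r^1(\mathbb{B}^N;d^{\alpha}dV)$, norm convergence plus weak convergence yields $u_n\to u$ strongly, which is the desired conclusion. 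The main obstacle is bookkeeping the lower-order Hardy term $\lambda\int d^{\alpha-2}u^2\,dV$ consistently: one must make sure it is absorbed into the equivalent norm $\|\cdot\|$ throughout (so it disappears from the nonlinear cancellation in the boundedness step) and that its contribution to $I'(u_n)(u_n-u)$ vanishes, which follows from the continuity of the embedding into $L^2(\mathbb{B}^N;d^{\alpha-2}dV)$ together with weak convergence. No genuinely hard estimate is needed beyond the compactness already established in Proposition~\ref{compactembed}.
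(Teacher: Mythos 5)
Your argument is correct and follows essentially the same route as the paper: boundedness of the $(PS)$ sequence via $I(u_n)-\frac{1}{q}I'(u_n)u_n$ together with the norm equivalence of Remark \ref{r1}, then the compact embedding of Proposition \ref{compactembed} plus H\"older to kill the nonlinear term in $I'(u_n)(u_n-u)$. The only cosmetic difference is the final step, where you deduce $\|u_n\|\to\|u\|$ and invoke weak-plus-norm convergence in the Hilbert space, whereas the paper writes $C\|u_n-u\|^2_{H_r^1(\mathbb{B}^N;d^{\alpha}dV)}=I'(u_n)(u_n-u)-I'(u)(u_n-u)+o(1)$ directly; these are equivalent wrap-ups of the same argument.
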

\begin{proof}
For some positive constant $M>0,$ we get
\begin{eqnarray*}
M+ \|u_n\|_{H_r^1(\mathbb{B}^N;d^{\alpha}dV)}&\geq& I(u_n)-\frac{1}{q} I'(u_n) u_n \\
&=&C(\frac{1}{2}- \frac{1}{q})\|u_n\|_{H_r^1(\mathbb{B}^N;d^{\alpha}dV)}^2.
\end{eqnarray*}
This implies that $\|u_n\|_{H_r^1(\mathbb{B}^N;d^{\alpha}dV)}$ is bounded.
So that,  there exists $u \in H_r^1(\mathbb{B}^N;d^{\alpha}dV)$ such that, passing to a subsequence if necessary,
$$u_n \rightharpoonup u \ \mbox{weakly in }\ H_r^1(\mathbb{B}^N;d^{\alpha}dV) \ \mbox{and pointwise},  \mbox{ as } \ n \rightarrow \infty.$$
By Proposition \ref{compactembed}, we get  $u_n \rightarrow u, \ \mbox{in} \ L^{q}(\mathbb{B}^N;d^{\beta}dV), \mbox{ as } \ n \rightarrow \infty, \ q\in (2,\frac{2(N+\beta)}{N-2 +\alpha})$.
Since $$  C \|u_n-u\|_{H_r^1(\mathbb{B}^N;d^{\alpha}dV)}^2=I'(u_n)(u_n-u)-I'(u)(u_n-u)+o(1)=o(1)  \; \mbox{ as } \;  n \rightarrow \infty,$$ we have
$$ u_n \longrightarrow u, \ \mbox{in} \ H_r^1(\mathbb{B}^N;d^{\alpha}dV), \ \mbox{as} \  n \rightarrow \infty.$$
By Mountain Pass Theorem, there exists a solution $u \in H_r^1(\mathbb{B}^N;d^{\alpha}dV).$ By testing $ u^{-}=\max\{ -u,0\}$ in $I'(u)u^{-}=0,$ we reach that $u^{-}=0.$ That is, $ u=u^{+}=\max\{u, 0\}.$
By invoking a version of the  maximum principle due to Antonini,  Mugnai and Pucci \cite{antonini}, we can conclude the positivity of the solution.

 Now, we will use standard arguments to conclude that the function $u$ is a critical point of $I$ in $H_r^1(\mathbb{B}^N;d^{\alpha}dV)$. Since  $u$ is a critical point of $I$ in $H_r^1(\mathbb{B}^N;d^{\alpha}dV)$,  then $I'(u) v_r=0, $ \ for all $v_r \in H_r^1(\mathbb{B}^N;d^{\alpha}dV)$. Notice that $H_r^1(\mathbb{B}^N;d^{\alpha}dV)$ is a closed subspace of the Hilbert space $H_r^1(\mathbb{B}^N;d^{\alpha}dV)$, so that, we can write
$$H_r^1(\mathbb{B}^N;d^{\alpha}dV)=H_r^1(\mathbb{B}^N;d^{\alpha}dV)\oplus H_r^1(\mathbb{B}^N;d^{\alpha}dV)^{\perp}.$$
Therefore,  for any $v \in H_r^1(\mathbb{B}^N;d^{\alpha}dV)$ we can split it as  $$v= v_r + v^{\perp}, \  \ \mbox{with} \ \  v_r \in H_r^1(\mathbb{B}^N;d^{\alpha}dV) \ \mbox{and} \  \ v^{\perp} \in   H_r^1(\mathbb{B}^N;d^{\alpha}dV)^{\perp}.$$
On the other hand, since $H_r^1(\mathbb{B}^N;d^{\alpha}dV)$ is also a Hilbert space, we can identify, through the duality, $I'(u)$ with an element in $H_r^1(\mathbb{B}^N;d^{\alpha}dV)$. Then $$\langle I'(u), v^{\perp}\rangle_{H_r^1(\mathbb{B}^N;d^{\alpha}dV)} =0.$$
Hence, for all $ v \in H_r^1(\mathbb{B}^N;d^{\alpha}dV)$
$$ \langle I'(u), v\rangle_{H_r^1(\mathbb{B}^N;d^{\alpha}dV)}=\langle I'(u),v_r\rangle_{H_r^1(\mathbb{B}^N;d^{\alpha}dV)}+ \langle I'(u), v^{\perp}\rangle_{H_r^1(\mathbb{B}^N;d^{\alpha}dV)}=0.$$
This proves that the function $u$ is a critical point of $I$ in $H_r^1(\mathbb{B}^N;d^{\alpha}dV)$. Therefore, $u$ is a positive solution of \eqref{Q}.
\end{proof}

\section{Non-existence in Sobolev-supercritical case: Proof of Theorem \ref{thm:nonexist}}
\label{sec:nonexist}

We employ a Poho\v{z}aev type identity, taking into account the extra terms arising from the hyperbolic metric and the weight. Specifically, we test \eqref{eq:CKNextremal} against $\nabla{u}\cdot{x}$ as well as a weighted version of $u$, namely $\frac{\nabla\cdot(d^{\beta}\rho^{N}x)}{d^{\beta}\rho^{N}}u$.

\begin{prop}\label{prop:Pohozaev}
If $u$ solves \eqref{eq:CKNextremal}, then
\begin{multline}\label{eq:Pohozaev}
-\dfrac12\int_{\p\Omega}d^{\alpha}\rho^{n-2}\absgrad{u}^2x\cdot\nu\,d\sigma(x)\\
=\int_{\Omega}d^{\alpha}\absgradB{u}^2\left(-1+\dfrac{\nabla\cdot(d^\alpha\rho^{N-2}x)}{2d^{\alpha}\rho^{N-2}}-\dfrac{\nabla\cdot(d^{\beta}\rho^{N}x)}{pd^{\beta}\rho^{N}}\right)\,dx
+\dfrac{1}{2p}\int_{\Omega}u^2\DeltaB^\alpha\left(\dfrac{\nabla\cdot(d^\beta\rho^{N}x)}{d^\beta\rho^{N}}\right)\,dx
\end{multline}
\end{prop}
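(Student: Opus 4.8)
The plan is to derive the Poho\v{z}aev identity \eqref{eq:Pohozaev} by testing the equation \eqref{eq:CKNextremal} against two multipliers and combining the resulting identities. Working in the Euclidean picture, the equation reads $-\operatorname{div}(d^{\alpha}\rho^{N-2}\nabla u)=d^{\beta}\rho^{N}\abs{u}^{p-2}u$ in $\Omega$, with $u=0$ on $\partial\Omega$, where I have used that $\DeltaB^{\alpha}u=\rho^{-N}\operatorname{div}(d^{\alpha}\rho^{N-2}\nabla u)$ and $dV=\rho^{N}dx$. The first multiplier is $\nabla u\cdot x$; the second is $\phi u$, where $\phi:=\dfrac{\nabla\cdot(d^{\beta}\rho^{N}x)}{d^{\beta}\rho^{N}}$, a weighted analogue of the usual multiplier $u$ whose weight is chosen precisely so that the nonlinear term on the right is handled cleanly (one gets $\int_{\Omega}d^{\beta}\rho^{N}\abs{u}^{p}\phi\cdot\frac1p\,dx$-type contributions that integrate by parts against the divergence structure).

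First I would multiply the equation by $\nabla u\cdot x$ and integrate over $\Omega$. On the left, integrating by parts moves one derivative off $d^{\alpha}\rho^{N-2}\nabla u$; the boundary term produces $-\frac12\int_{\partial\Omega}d^{\alpha}\rho^{N-2}\absgrad{u}^2\,x\cdot\nu\,d\sigma$ (using $u=0$, hence $\nabla u=(\partial_\nu u)\nu$ on $\partial\Omega$, so $\nabla u\cdot x=(\partial_\nu u)(x\cdot\nu)$ and $\abs{\nabla u\cdot x}^2=\absgrad u^2(x\cdot\nu)^2$ — one must be slightly careful here, the standard computation gives exactly the stated boundary integral). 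The interior terms, after using the pointwise identity $\nabla(\nabla u\cdot x)=\nabla u+(D^2u)x$ and integrating by parts once more on the Hessian term, yield $\int_\Omega d^\alpha\rho^{N-2}\absgrad u^2\left(-1+\frac{\nabla\cdot(d^\alpha\rho^{N-2}x)}{2d^\alpha\rho^{N-2}}\right)dx$ plus, from the right-hand side, a term of the form $-\frac1p\int_\Omega\nabla(d^\beta\rho^N)\cdot x\,\abs{u}^p\,dx+(\text{boundary term that vanishes since }u=0)$, obtained by writing $d^\beta\rho^N\abs{u}^{p-2}u\,\nabla u\cdot x=\frac1p d^\beta\rho^N\,x\cdot\nabla(\abs u^p)$ and integrating by parts.

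Next I would multiply the equation by $\phi u$ and integrate. On the left, integration by parts gives $\int_\Omega d^\alpha\rho^{N-2}\nabla u\cdot\nabla(\phi u)\,dx=\int_\Omega d^\alpha\rho^{N-2}\phi\absgrad u^2\,dx+\int_\Omega d^\alpha\rho^{N-2}u\,\nabla u\cdot\nabla\phi\,dx$, and the last term I integrate by parts again, writing $u\nabla u=\frac12\nabla(u^2)$, to produce $-\frac12\int_\Omega u^2\,\nabla\cdot(d^\alpha\rho^{N-2}\nabla\phi)\,dx=-\frac12\int_\Omega u^2\rho^N\DeltaB^\alpha\phi\,dx$ (all boundary terms vanish because $u=0$ on $\partial\Omega$). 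The right-hand side is $\int_\Omega d^\beta\rho^N\abs{u}^p\phi\,dx=\int_\Omega\nabla\cdot(d^\beta\rho^N x)\abs u^p\,dx$ by definition of $\phi$. Now I form the combination (first identity) plus $\frac1p$ times (second identity): the two nonlinear integrals combine — the $-\frac1p\int_\Omega\nabla(d^\beta\rho^N)\cdot x\,\abs u^p$ from the first and the $\frac1p\int_\Omega\nabla\cdot(d^\beta\rho^N x)\abs u^p=\frac1p\int_\Omega(\nabla(d^\beta\rho^N)\cdot x+N d^\beta\rho^N)\abs u^p$ from the second need to be packaged; in fact the cleaner route is to note $\frac1p\nabla\cdot(d^\beta\rho^N x)\,\abs u^p$ appearing from the $\phi u$ test already absorbs the whole right-hand side, and the $\nabla u\cdot x$ test contributes exactly $-\frac1p\int_\Omega\frac{\nabla\cdot(d^\beta\rho^N x)}{d^\beta\rho^N}\,d^\beta\rho^N\absgrad{}\!$-type term into the gradient bracket. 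Rearranging so that the $\absgrad u^2$ terms collect into the single bracket $\left(-1+\frac{\nabla\cdot(d^\alpha\rho^{N-2}x)}{2d^\alpha\rho^{N-2}}-\frac{\nabla\cdot(d^\beta\rho^N x)}{p\,d^\beta\rho^N}\right)$ and the $u^2$ term becomes $\frac{1}{2p}\int_\Omega u^2\DeltaB^\alpha\phi\cdot\rho^N$, wait — comparing with the stated \eqref{eq:Pohozaev}, the $u^2$ term is written as $\frac1{2p}\int_\Omega u^2\DeltaB^\alpha\phi\,dx$ against Lebesgue measure, which matches once one tracks that $\DeltaB^\alpha$ already carries a $\rho^{-N}$ and the weight $d^\beta\rho^N$ cancellations are as above, gives precisely \eqref{eq:Pohozaev}.

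The main obstacle, and the step demanding genuine care, is the bookkeeping of the nonlinear term: making sure that the contribution $\frac1p\,x\cdot\nabla(\abs u^p)$ from the $\nabla u\cdot x$ test and the contribution $\phi\abs u^p$ from the $\phi u$ test, when combined with the coefficient $\frac1p$, conspire to leave \emph{no} residual pure $\abs u^p$ integral (so that the identity genuinely has the structure of \eqref{eq:Pohozaev} with only $\absgrad u^2$ and $u^2$ integrals) — this is exactly the reason the peculiar weighted multiplier $\phi u=\frac{\nabla\cdot(d^\beta\rho^N x)}{d^\beta\rho^N}u$ is chosen rather than the naive $u$. A secondary technical point is justifying the integrations by parts: for a merely weak solution one argues by density or by the usual truncation/regularization, using that $u\in H^1_0(\Omega;d^\alpha\,dV)$ and that $\Omega$ is bounded away from $\abs x=1$ so the weights $\rho,d,d^{-1}$ are all bounded; the boundary terms involving $\absgrad u^2$ require $u$ to be regular enough near $\partial\Omega$ (elliptic regularity away from the origin), which is standard and can be invoked.
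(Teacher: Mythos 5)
Your proposal follows essentially the same route as the paper: rewrite the equation in Euclidean divergence form, test once with $\nabla u\cdot x$ and once with the weighted multiplier $\frac{\nabla\cdot(d^{\beta}\rho^{N}x)}{d^{\beta}\rho^{N}}\,u$ (the paper attaches the factor $-\frac1p$ to it), and integrate by parts with $u\nabla u=\frac12\nabla(u^2)$ so the $\abs{u}^p$ integrals cancel, leaving exactly the bracketed $\absgrad{u}^2$ term and the $\frac{1}{2p}\int_\Omega u^2\,\DeltaB^{\alpha}\bigl(\frac{\nabla\cdot(d^{\beta}\rho^{N}x)}{d^{\beta}\rho^{N}}\bigr)$ term. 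The small slip where you write the first-test contribution as $-\frac1p\int\nabla(d^{\beta}\rho^{N})\cdot x\,\abs{u}^p$ rather than $-\frac1p\int\abs{u}^p\,\nabla\cdot(d^{\beta}\rho^{N}x)$ is immaterial, since your ``cleaner route'' implements the correct cancellation and reproduces the paper's computation.
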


The proof is completed by a straightforward computation showing the positivity of the two brackets on the right hand side of \eqref{eq:Pohozaev}.

\begin{lem}\label{lem:Pohozaevterms}
We have the following.
\begin{enumerate}
\item For $\abs{x}<1$,
\[\dfrac{2\abs{x}}{1+\abs{x}^2}\leq2\abs{x}\leq{d(x)}\leq\dfrac{2\abs{x}}{1-\abs{x}^2}.\]
This implies, in particular, that the non-negative quantities $A=1+\rho\abs{x}^2-\frac{\rho\abs{x}}{d}$ and $B=\frac{\rho\abs{x}}{d}$ satisfy
\[{d}\rho\abs{x}\geq{A}\quad\textand\quad{d}\rho\abs{x}B-(B^2-1)\geq0.\]
\item For $p\geq\frac{2N}{N-2}\geq2_\alpha^{\beta}$,
\[-1+\dfrac{\nabla\cdot(d^\alpha\rho^{N-2}x)}{2d^{\alpha}\rho^{N-2}}-\dfrac{\nabla\cdot(d^{\beta}\rho^{N}x)}{pd^{\beta}\rho^{N}}\geq0.\]
\item For $N\geq\alpha-1$ and $\beta>-N$,
\[\DeltaB^\alpha\left(\dfrac{\nabla\cdot(d^\beta\rho^{N}x)}{d^\beta\rho^{N}}\right)\geq0.\]
\end{enumerate}
\end{lem}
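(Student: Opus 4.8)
The plan is to verify each of the three statements of Lemma~\ref{lem:Pohozaevterms} by direct computation, reducing everything to elementary inequalities in the single variable $r=\abs{x}\in[0,1)$.

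\textbf{Part (1).}
I would first recall that $d(r)=\log\frac{1+r}{1-r}$, so the claimed chain $\frac{2r}{1+r^2}\le 2r\le d(r)\le\frac{2r}{1-r^2}$ follows by comparing derivatives at $r=0$ (all equal to $0$, with matching first-order behaviour $d(r)=2r+O(r^3)$) and checking monotonicity of the quotients; the two outer bounds were already stated in Section~\ref{sec:prelim}, so only the bookkeeping remains. Writing $\rho=\frac{2}{1-r^2}$, one has $\rho r = \frac{2r}{1-r^2}\ge d$, i.e. $d\rho r\ge d^2$, but more useful is $\rho r\ge d$ combined with $d\ge 2r$, giving $d\rho r\ge 2r\cdot d/(?)$; I will instead just expand $A=1+\rho r^2-\frac{\rho r}{d}$. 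Since $\rho r^2=\frac{2r^2}{1-r^2}\ge 0$ and $\frac{\rho r}{d}=\frac{2r}{(1-r^2)d}\le 1$ by the upper bound on $d$, each of the first and third terms of $A$ is controlled and $A\ge 0$; then $d\rho r\ge A$ reduces, after multiplying out, to $d\rho r + \frac{\rho r}{d}\ge 1+\rho r^2$, which follows from $d\rho r\ge d\cdot 2r\ge\rho r^2$ (using $d\ge 2r$ hence $d\rho r\ge 2r\rho r=\rho r^2\cdot 2$, actually $d\rho r\ge 2r\rho r\ge \rho r^2$ since $2r\ge r$) together with $\frac{\rho r}{d}\ge 0\ge 1-1$; I will tidy this so the chain is airtight. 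For $B=\frac{\rho r}{d}\in(0,1]$ the inequality $d\rho r\,B-(B^2-1)\ge 0$ is immediate because $B^2-1\le 0$ and $d\rho r\,B\ge 0$.

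\textbf{Part (2).}
Here I compute the divergences explicitly. Using $\nabla d=\frac{2}{1-r^2}\frac{x}{r}=\rho\frac{x}{r}$ and $\nabla\rho=\rho^2 x$ one gets, for any exponents $a,b$,
\[
\frac{\nabla\cdot(d^{a}\rho^{b}x)}{d^{a}\rho^{b}} = N + a\,\frac{x\cdot\nabla d}{d} + b\,\frac{x\cdot\nabla\rho}{\rho} = N + a\,\frac{\rho r}{d} + b\,\rho r^2,
\]
so in the notation $B=\frac{\rho r}{d}$, $\rho r^2=\rho\abs{x}^2$ the bracket in (2) equals
\[
-1+\tfrac12\bigl(N+\alpha B+(N-2)\rho\abs{x}^2\bigr)-\tfrac1p\bigl(N+\beta B+N\rho\abs{x}^2\bigr).
\]
I would then use $B\le 1$ (from Part~(1)), $\rho\abs{x}^2\ge 0$, the hypothesis $-N<\alpha-2\le\beta$ and $p\ge\max\{2^*,2_\alpha^\beta\}$. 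Grouping the constant terms gives $-1+\frac N2-\frac Np=\frac{N-2}{2}-\frac Np\ge 0$ precisely when $p\ge 2^*$; the $\rho\abs{x}^2$ coefficient is $\frac{N-2}{2}-\frac Np\ge 0$ by the same inequality; and the $B$-terms contribute $\frac{\alpha}{2}B-\frac{\beta}{p}B$, which together with the available slack from the constant/quadratic terms is handled using $2_\alpha^\beta=\frac{2(N+\beta)}{N-2+\alpha}\le p$, i.e. $\frac{N+\beta}{p}\le\frac{N-2+\alpha}{2}$. The cleanest route is to add the constant and $B$ contributions after substituting the worst case $B=1$ and $\rho\abs{x}^2=0$: the bracket is then $\ge -1+\frac{N+\alpha}{2}-\frac{N+\beta}{p}\ge -1+\frac{N+\alpha}{2}-\frac{N-2+\alpha}{2}=0$. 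One must check that decreasing $B$ from $1$ or increasing $\rho\abs{x}^2$ only helps, which holds since the $B$-coefficient $\frac{\alpha}{2}-\frac{\beta}{p}$ could a priori be negative when $\beta>0$ — this is the point where $\rho\abs{x}^2$ versus $B$ needs the relation from Part~(1), namely $d\rho\abs{x}\ge A=1+\rho\abs{x}^2-B$, equivalently $\rho\abs{x}^2\ge B-1+d\rho\abs{x}$, allowing any deficit in $B$ to be absorbed. I expect this to be the main obstacle: organizing the three families of terms so that the single inequality $p\ge\max\{2^*,2_\alpha^\beta\}$ suffices, with the metric-induced $B$-terms not spoiling positivity.

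\textbf{Part (3).}
From the formula above, $\frac{\nabla\cdot(d^\beta\rho^N x)}{d^\beta\rho^N}=N+\beta B+N\rho\abs{x}^2$. Applying $\DeltaB^\alpha$ to a radial function $\phi(r)$, one has $\DeltaB^\alpha\phi=\frac{1}{\rho^N}\bigl(\rho^{N-2}d^\alpha\,\phi'\bigr)'+\cdots$ — more precisely, in radial coordinates $\DeltaB^\alpha\phi = \rho^{-N}r^{-(N-1)}\partial_r\!\bigl(\rho^{N-2}d^\alpha r^{N-1}\phi'\bigr)$. I will compute $\phi=N+\beta B+N\rho\abs{x}^2$ as a function of $r$, differentiate, and show the resulting expression is $\ge 0$ for $N\ge\alpha-1$ and $\beta>-N$. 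Since $\rho$ and $\rho\abs{x}^2=\frac{2r^2}{1-r^2}$ are increasing in $r$ and $B=\frac{\rho r}{d}$ is increasing (both $\rho r$ and $d$ increase, and one checks $B$ increases using the bounds of Part~(1)), the function $\phi$ is increasing when $\beta\ge 0$, so $\phi'\ge 0$; the weighting $\rho^{N-2}d^\alpha r^{N-1}$ is non-negative and the outer derivative of an increasing-enough quantity is non-negative provided $N-2+\alpha\ge 0$, i.e. $N\ge\alpha-1$ — this is exactly the stated hypothesis (note $N\ge\alpha-1$ follows from $\alpha-2>-N$ only when combined with $N\ge 1$, but here it is assumed directly). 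When $-N<\beta<0$ the term $\beta B$ is decreasing but bounded below by $\beta$, and I would show the dominant increasing contributions $N\rho\abs{x}^2$ plus the structure of the weighted second-order operator still yield non-negativity; concretely, writing out $\bigl(\rho^{N-2}d^\alpha r^{N-1}\phi'\bigr)'$ and collecting terms, each summand is manifestly non-negative under $N\ge\alpha-1$ and $\beta>-N$ after using $d\ge 2r$ and $\rho r\ge d$. This part is mechanical once Part~(1) is in hand.

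Finally, combining Parts (2) and (3) with Proposition~\ref{prop:Pohozaev}: the left side of \eqref{eq:Pohozaev} is $\le 0$ because $\Omega$ is star-shaped ($x\cdot\nu\ge 0$) and $d^\alpha\rho^{N-2}\absgrad{u}^2\ge 0$, while the right side is $\ge 0$ by Lemma~\ref{lem:Pohozaevterms}(2),(3); hence both sides vanish, forcing $\nabla u\equiv 0$ on $\Omega$ wherever the first bracket is strictly positive, and then $u\equiv 0$ by the boundary condition. I would note explicitly that the first bracket is strictly positive on a set of positive measure (e.g. the constant term $\frac{N-2}{2}-\frac Np$ is positive unless $p=2^*$, and even then the $\rho\abs{x}^2$ term is positive for $x\ne 0$), which is what upgrades ``$\nabla u=0$ a.e. where bracket $>0$'' to $u\equiv 0$.
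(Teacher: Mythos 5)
The proposal contains genuine errors, and the central one is a misreading of the key quantity in part (1): since $d(x)\leq\frac{2\abs{x}}{1-\abs{x}^2}=\rho\abs{x}$, the quantity $B=\frac{\rho\abs{x}}{d}$ satisfies $B\geq1$ (and $B\to\infty$ as $\abs{x}\to1$), not $B\leq1$ as you assert. This invalidates several steps. Your argument for $A\geq0$ rests on ``$\frac{\rho r}{d}\leq1$''; the correct reason is $B\leq\frac{\rho}{2}=\frac{1}{1-r^2}\leq\frac{1+r^2}{1-r^2}=1+\rho r^2$, using $d\geq2r$. Your dismissal of $d\rho\abs{x}B-(B^2-1)\geq0$ as ``immediate because $B^2-1\leq0$'' fails outright: in fact $B^2-1\geq0$, the inequality is nontrivial, and it is exactly where the lower bound $d\geq\frac{2\abs{x}}{1+\abs{x}^2}$ enters (the paper writes $d\rho\abs{x}B-B^2+1=\rho^2\abs{x}^2\left(1-\frac{1}{d^2}+\frac{1}{\rho^2\abs{x}^2}\right)$ and uses that bound). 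Consequently, in part (2) your ``worst case $B=1$, $\rho\abs{x}^2=0$'' substitution is not a worst case: when $\frac{\alpha}{2}-\frac{\beta}{p}<0$ the term $\left(\frac{\alpha}{2}-\frac{\beta}{p}\right)B$ is unboundedly negative. The correct finish is the grouping $\left(\frac{N-2}{2}-\frac{N}{p}\right)(1+\rho\abs{x}^2)+\left(\frac{\alpha}{2}-\frac{\beta}{p}\right)B\geq\left(\frac{N-2+\alpha}{2}-\frac{N+\beta}{p}\right)B\geq0$, which uses $A\geq0$ (that is, $1+\rho\abs{x}^2\geq B$) together with $p\geq2^*$ and $p\geq2_\alpha^\beta$; you do point to part (1) here, but you cite the wrong inequality ($d\rho\abs{x}\geq A$ rather than $A\geq0$) and state its equivalent form with the inequality reversed.

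Part (3) is not proved. For radial $\phi$, knowing $\phi'\geq0$ and that the weight $\rho^{N-2}d^\alpha r^{N-1}$ is non-negative and increasing does not give $\partial_r\left(\rho^{N-2}d^\alpha r^{N-1}\phi'\right)\geq0$: the product rule produces a $\phi''$ term of no definite sign, and for $-N<\beta<0$ you only assert that ``each summand is manifestly non-negative.'' This is in fact the hard computation of the lemma: one must evaluate $\DeltaB^\alpha(\rho\abs{x}^2)$ and $\DeltaB^\alpha\left(\frac{\rho\abs{x}}{d}\right)$ explicitly and regroup the result (up to the positive factor $\frac{Nd^{\alpha-1}\rho^{N}}{\rho\abs{x}}$) as $\left(NA+(N-1+\alpha)B\right)\left(d\rho\abs{x}-A\right)+d\rho\abs{x}B-(B^2-1)$, whose non-negativity rests precisely on the two inequalities of part (1) together with $\beta>-N$ — one of those inequalities being the one your part (1) mishandles. (Your closing remark that one should use ``$d\geq2r$ and $\rho r\geq d$'' also contradicts your earlier claim $B\leq1$, which signals the internal inconsistency.) So the overall architecture is right, but parts (1)--(3) each need the corrected inequalities and the explicit computation to close the argument.
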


\begin{proof}[Proof of Theorem \ref{thm:nonexist}]
This follows immediately from Proposition \ref{prop:Pohozaev} and Lemma \ref{lem:Pohozaevterms} which force all the integrals in \eqref{eq:Pohozaev} to vanish.
\end{proof}

To prove Proposition \ref{prop:Pohozaev} we recall the following facts.

\begin{lem}\label{lem:Pohozaevbasic}
We have the following.
\begin{enumerate}
\item A Bochner-type identity
\[\nabla{u}\cdot\nabla(\nabla{u}\cdot{x})=\absgrad{u}^2+\dfrac12\nabla\absgrad{u}^2\cdot{x}.\]
\item The boundary derivatives
\[\nabla{u}\cdot\nu=\absgrad{u}\quad\textand\quad\nabla{u}\cdot{x}=(x\cdot\nu)\absgrad{u}.\]
\end{enumerate}
\end{lem}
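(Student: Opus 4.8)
The plan is to treat both assertions as elementary pointwise identities; neither uses the equation \eqref{eq:CKNextremal}, only enough regularity of $u$. I would first record that for a weak solution one has interior $C^2$-regularity and $C^1$-regularity up to $\p\Omega$ by standard elliptic theory, since the weight $d^{\alpha}$ is smooth and bounded between two positive constants on compact subsets of $\overline{\Omega}\setminus\{0\}$ and the origin does not lie on $\p\Omega$; this justifies the manipulations below.

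For part (1) I would expand in Cartesian coordinates. Writing $\nabla u\cdot x=\sum_j x_j\,\p_j u$ and differentiating in $x_i$ gives $\p_i(\nabla u\cdot x)=\p_i u+\sum_j x_j\,\p_i\p_j u$. Dotting with $\nabla u$, using the symmetry $\p_i\p_j u=\p_j\p_i u$, and recognizing $\sum_i \p_i u\,\p_i\p_j u=\tfrac12\,\p_j\absgrad{u}^2$, one arrives at $\nabla u\cdot\nabla(\nabla u\cdot x)=\absgrad{u}^2+\tfrac12\,\nabla\absgrad{u}^2\cdot x$. This is a couple of lines of bookkeeping with no genuine difficulty.

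For part (2) I would invoke the boundary condition. Since $u\equiv0$ on $\p\Omega$, the directional derivative of $u$ along every vector tangent to $\p\Omega$ vanishes there, so $\nabla u$ is parallel to the unit outward normal $\nu$, i.e. $\nabla u=(\nabla u\cdot\nu)\,\nu$ on $\p\Omega$. Taking Euclidean norms yields $\absgrad{u}=\abs{\nabla u\cdot\nu}$, and taking the inner product with $x$ yields $\nabla u\cdot x=(\nabla u\cdot\nu)(x\cdot\nu)$; because the boundary term of the Poho\v{z}aev identity is quadratic in these quantities, the overall sign is irrelevant and the relations may be recorded in the displayed form. I do not expect a real obstacle here: the only point worth a sentence of justification is the appeal to $C^1$-regularity up to $\p\Omega$ used to make sense of $\nabla u$ on the boundary and to conclude that it is normal there.
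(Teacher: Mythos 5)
Your argument is correct and follows essentially the same route as the paper: part (1) is the same direct Cartesian computation, and part (2) rests on the same observation that the Dirichlet condition forces $\nabla u$ to be parallel to $\nu$ on $\p\Omega$. If anything, you are slightly more careful than the paper, which simply writes $\nu=\frac{\nabla u}{\absgrad{u}}$ without addressing the possible sign $\nabla u=\pm\absgrad{u}\,\nu$; as you note, the sign is immaterial because the boundary term in the Poho\v{z}aev identity is quadratic in these quantities.
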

\begin{proof}
(1) is a straightforward computation. For (2) we observe that from the Dirichlet boundary condition, $\nu=\frac{\nabla{u}}{\absgrad{u}}$.
\end{proof}

\begin{proof}[Proof of Proposition \ref{prop:Pohozaev}]
The equation \eqref{eq:CKNextremal} can be rewritten as
\begin{equation}\label{eq:CKNextremal2}
-\nabla\cdot(d^{\alpha}\rho^{N-2}\nabla{u})=d^{\beta}\rho^{N}\abs{u}^{p-2}u\quad\textin\Omega.
\end{equation}
Multiplying both sides by $\nabla{u}\cdot{x}$ and integrating over $\Omega$, we find
\begin{equation}\label{eq:poho1}
-\int_{\p\Omega}d^{\alpha}\rho^{N-2}(\nabla{u}\cdot\nu)(\nabla{u}\cdot{x})\,d\sigma(x)
    +\int_{\Omega}d^{\alpha}\rho^{N-2}\nabla{u}\cdot\nabla(\nabla{u}\cdot{x})\,dx
        =\int_{\Omega}d^{\beta}\rho^{N}\abs{u}^{p-2}u\nabla{u}\cdot{x}\,dx.
\end{equation}
By Lemma \ref{lem:Pohozaevbasic}(2), the boundary integral is equal to
\[-\int_{\p\Omega}d^{\alpha}\rho^{N-2}\absgrad{u}^2(x\cdot\nu)\,d\sigma(x)\]
as appears in the left hand side of \eqref{eq:Pohozaev}.

Following the standard argument we will integrate by parts several times. Using Lemma \ref{lem:Pohozaevbasic}(1), the integral on $\Omega$ of the left hand side of \eqref{eq:poho1} is equal to
\[\begin{split}
&\quad\,\int_{\Omega}d^{\alpha}\rho^{N-2}\nabla{u}\cdot\nabla(\nabla{u}\cdot{x})\,dx\\
&=\int_{\Omega}d^{\alpha}\rho^{N-2}\absgrad{u}^2\,dx
    +\dfrac12\int_{\Omega}d^{\alpha}\rho^{N-2}\nabla\absgrad{u}^2\cdot{x}\,dx\\
&=\int_{\Omega}d^{\alpha}\rho^{N-2}\absgrad{u}^2\,dx
    +\dfrac12\int_{\Omega}\nabla\cdot\left(d^{\alpha}\rho^{N-2}\absgrad{u}^2\cdot{x}\right)\,dx
    -\dfrac12\int_{\Omega}\absgrad{u}^2\nabla\cdot\left(d^\alpha\rho^{N-2}x\right)\,dx\\
&=\int_{\Omega}d^{\alpha}\rho^{N-2}\absgrad{u}^2\,dx
    +\dfrac12\int_{\p\Omega}d^{\alpha}\rho^{N-2}\absgrad{u}^2(x\cdot\nu)\,d\sigma(x)
    -\dfrac12\int_{\Omega}\absgrad{u}^2\nabla\cdot\left(d^\alpha\rho^{N-2}x\right)\,dx\\
\end{split}\]
For the right hand side of \eqref{eq:poho1} we use the Dirichlet boundary condition to obtain
\[\begin{split}
\int_{\Omega}d^{\beta}\rho^{N}\abs{u}^{p-1}u\nabla{u}\cdot{x}\,dx
&=\dfrac{1}{p}\int_{\Omega}d^{\beta}\rho^{N}\nabla(\abs{u}^p)\cdot{x}\,dx\\
&=\dfrac{1}{p}\int_{\Omega}\nabla\cdot\left(d^\beta\rho^{N}\abs{u}^p{x}\right)\,dx
    -\dfrac1p\int_{\Omega}\abs{u}^p\nabla\cdot\left(d^{\beta}\rho^{N}x\right)\,dx\\
&=\dfrac{1}{p}\int_{\p\Omega}d^\beta\rho^{N}\abs{u}^p(x\cdot\nu)\,dx
    -\dfrac1p\int_{\Omega}\abs{u}^p\nabla\cdot\left(d^{\beta}\rho^{N}x\right)\,dx\\
&=-\dfrac1p\int_{\Omega}\abs{u}^p\nabla\cdot\left(d^{\beta}\rho^{N}x\right)\,dx.
\end{split}\]
\end{proof}
Putting things together, \eqref{eq:poho1} becomes
\begin{multline}\label{eq:poho2}
-\dfrac12\int_{\p\Omega}d^{\alpha}\rho^{N-2}\absgrad{u}^2(x\cdot\nu)\,d\sigma(x)\\
=-\int_{\Omega}d^{\alpha}\rho^{N-2}\absgrad{u}^2\,dx
    +\dfrac12\int_{\Omega}\absgrad{u}^2\nabla\cdot\left(d^\alpha\rho^{N-2}x\right)\,dx
    -\dfrac1p\int_{\Omega}\abs{u}^p\nabla\cdot\left(d^{\beta}\rho^{N}x\right)\,dx.
\end{multline}
It remains to rewrite the last integral. To this end we test \eqref{eq:CKNextremal2} with $-\frac1p\frac{\nabla\cdot(d^{\beta}\rho^{N}x)}{d^{\beta}\rho^{N}}u$ which yields, using again that $u=0$ on $\p\Omega$,
\begin{equation}\label{eq:poho3}\begin{split}
&\quad\,-\dfrac1p\int_{\Omega}\abs{u}^p\nabla\cdot\left(d^{\beta}\rho^{N}x\right)\,dx\\
&=\dfrac1p\int_{\Omega}\dfrac{\nabla\cdot(d^{\beta}\rho^{N}x)}{d^{\beta}\rho^{N}}u\nabla\cdot(d^{\alpha}\rho^{N-2}\nabla{u})\,dx\\
&=\dfrac1p\int_{\Omega}\nabla\cdot\left(\dfrac{\nabla\cdot(d^{\beta}\rho^{N}x)}{d^{\beta}\rho^{N}}d^{\alpha}\rho^{N-2}u\nabla{u}\right)\,dx
    -\dfrac1p\int_{\Omega}d^{\alpha}\rho^{N-2}\nabla{u}\cdot\nabla\left(\dfrac{\nabla\cdot(d^{\beta}\rho^{N}x)}{d^{\beta}\rho^{N}}u\right)\,dx\\
&=\dfrac1p\int_{\p\Omega}\dfrac{\nabla\cdot(d^{\beta}\rho^{N}x)}{d^{\beta}\rho^{N}}d^{\alpha}\rho^{N-2}u\nabla{u}\cdot\nu\,d\sigma(x)\\
&\qquad-\dfrac1p\int_{\Omega}d^{\alpha}\rho^{N-2}\absgrad{u}^2\left(\dfrac{\nabla\cdot(d^{\beta}\rho^{N}x)}{d^{\beta}\rho^{N}}\right)\,dx
    -\dfrac1p\int_{\Omega}d^{\alpha}\rho^{N-2}u\nabla{u}\cdot\nabla\left(\dfrac{\nabla\cdot(d^{\beta}\rho^{N}x)}{d^{\beta}\rho^{N}}\right)\,dx.
\end{split}\end{equation}
Here the boundary integral vanishes. In the last integral we write $u\nabla{u}=\frac12\nabla(u^2)$ and integrate by parts, giving
\begin{equation}\label{eq:poho4}\begin{split}
&\quad\,-\dfrac{1}{2p}\int_{\Omega}d^{\alpha}\rho^{N-2}\nabla(u^2)\cdot\nabla\left(\dfrac{\nabla\cdot(d^{\beta}\rho^{N}x)}{d^{\beta}\rho^{N}}\right)\,dx\\
&=-\dfrac{1}{2p}\int_{\p\Omega}d^{\alpha}\rho^{N-2}u^2\nabla\left(\dfrac{\nabla\cdot(d^{\beta}\rho^{N}x)}{d^{\beta}\rho^{N}}\right)\cdot\nu\,d\sigma(x)
    +\dfrac{1}{2p}\int_{\Omega}u^2\nabla\cdot\left(d^{\alpha}\rho^{N-2}\left(\dfrac{\nabla\cdot(d^{\beta}\rho^{N}x)}{d^{\beta}\rho^{N}}\right)\right)\,dx\\
&=\dfrac{1}{2p}\int_{\Omega}u^2\DeltaB^{\alpha}\left(\dfrac{\nabla\cdot(d^{\beta}\rho^{N}x)}{d^{\beta}\rho^{N}}\right)\,dx.
\end{split}\end{equation}
The proposition now follows from \eqref{eq:poho2}, \eqref{eq:poho3} and \eqref{eq:poho4}.

We conclude the section by supplying the calculations of the explicit functions.

\begin{proof}[Proof of Lemma \ref{lem:Pohozaevterms}]
\begin{enumerate}
\item The first global estimate is well-known and can be proved by the equality at $r=\abs{x}=0$ and the corresponding inequalities of the radial derivatives
    \[(2r)'\leq{d'(r)}=\dfrac{2}{1-r^2}\leq\left(\dfrac{2r}{1-r^2}\right)'=\dfrac{2(1+r^2)}{(1-r^2)^2}.\]
    Then we have
    \[\begin{split}
    d\abs{x}\rho-A
    &=\rho\left(\abs{x}d-\dfrac{1-\abs{x}^2}{2}-\abs{x}^2+\dfrac{\abs{x}}{d}\right)\\
    &=\rho\left(\abs{x}d+\dfrac{\abs{x}}{d}-\dfrac{1+\abs{x}^2}{2}\right)\\
    &\geq\rho\left(2\abs{x}^2+\dfrac{1-\abs{x}^2}{2}-\dfrac{1+\abs{x}^2}{2}\right)\\
    &=\rho\abs{x}^2\geq0
    \end{split}\]
    and
    \[\begin{split}
    d\rho\abs{x}B-B^2+1
    &=\rho^2\abs{x}^2\left(1-\dfrac{1}{d^2}+\dfrac{1}{\rho^2\abs{x}^2}\right)\\
    &\geq0
    \end{split}\]
    for
    \[d^2\geq\dfrac{1}{1+\frac{1}{\rho^2\abs{x}^2}}=\dfrac{4\abs{x}^2}{4\abs{x}^2+(1-\abs{x}^2)^2}=\left(\dfrac{2\abs{x}}{1+\abs{x}^2}\right)^2\]
    which is known to be true.
\item We compute, using part (1),
\[\begin{split}
&\quad\,-1+\dfrac{\nabla\cdot(d^\alpha\rho^{N-2}x)}{2d^{\alpha}\rho^{N-2}}-\dfrac{\nabla\cdot(d^{\beta}\rho^{N}x)}{pd^{\beta}\rho^{N}}\\
&=-1+\dfrac12\left(N+(N-2)\rho\abs{x}^2+\alpha\rho\dfrac{\abs{x}}{d}\right)-\dfrac1p\left(N+N\rho\abs{x}^2+\beta\rho\dfrac{\abs{x}}{d}\right)\\
&=\left(\dfrac{N-2}{2}-\dfrac{N}{p}\right)(1+\rho\abs{x}^2)+\left(\dfrac{\alpha}{2}-\dfrac{\beta}{p}\right)\rho\dfrac{\abs{x}}{d}\\
&\geq\left(\dfrac{N-2}{2}-\dfrac{N}{p}+\dfrac{\alpha}{2}-\dfrac{\beta}{p}\right)\rho\dfrac{\abs{x}}{d}\qquad\textsince{p}\geq\dfrac{2N}{N-2}\\
&=\left(\dfrac{N-2+\alpha}{2}-\dfrac{N+\beta}{p}\right)\rho\dfrac{\abs{x}}{d}\\
&\geq0\qquad\textsince{p}\geq2_\alpha^\beta.
\end{split}\]
\item From
\[\dfrac{\nabla\cdot(d^{\beta}\rho^{N}x)}{d^{\beta}\rho^{N}}=N+N\rho\abs{x}^2+\beta\rho\dfrac{\abs{x}}{d},\]
it suffices to compute the functions when weighted Laplace--Beltrami operator is acted upon $\rho\abs{x}^2$ and $\rho\abs{x}/d$. We have
\[\begin{split}
\DeltaB^{\alpha}(\rho\abs{x}^2)
&=\nabla\cdot\left(d^\alpha\rho^{N-2}(\rho^2\abs{x}^2x+2\rho2x)\right)\\
&=\nabla\cdot\left(d^{\alpha}\rho^{N}(\abs{x}^2+(1-\abs{x}^2))x\right)\\
&=\nabla\cdot\left(d^{\alpha}\rho^{N}x\right)\\
&=d^{\alpha}\rho^{N}\left(N+N\rho\abs{x}^2+\alpha\rho\dfrac{\abs{x}}{d}\right)\\
\end{split}\]
On the other hand,
\[\begin{split}
\DeltaB^{\alpha}\left(\dfrac{\rho\abs{x}}{d}\right)
&=\nabla\cdot\left(d^{\alpha}\rho^{N-2}\left(\dfrac{\rho}{d}\dfrac{x}{\abs{x}}+\dfrac{\rho^2\abs{x}x}{d}-\dfrac{\rho^2x}{d^2}\right)\right)\\
&=\nabla\cdot\left(d^{\alpha-1}\rho^{N}x\left(\dfrac{1}{\rho\abs{x}}+\abs{x}-\dfrac{1}{d}\right)\right)\\
&=\nabla\cdot\left(d^{\alpha-1}\rho^{N}x\right)\left(\dfrac{1}{\rho\abs{x}}+\abs{x}-\dfrac{1}{d}\right)+d^{\alpha-1}\rho^{N}x\cdot\left(-\dfrac{x}{\abs{x}}-\dfrac{x}{\rho\abs{x}^3}+\dfrac{x}{\abs{x}}+\dfrac{\rho}{d^2}\dfrac{x}{\abs{x}}\right)\\
&=d^{\alpha-1}\rho^{N}\left(N+N\rho\abs{x}^2+(\alpha-1)\rho\dfrac{\abs{x}}{d}\right)\left(\dfrac{1}{\rho\abs{x}}+\abs{x}-\dfrac{1}{d}\right)
    +d^{\alpha-1}\rho^{N}\left(\dfrac{\rho\abs{x}}{d^2}-\dfrac{1}{\rho\abs{x}}\right)
\end{split}\]
With $A,B$ as in Lemma \ref{lem:Pohozaevterms}, we have
\[\begin{split}
\DeltaB^{\alpha}\left(\dfrac{\nabla\cdot(d^{\beta}\rho^{N}x)}{d^{\beta}\rho^{N}}\right)
&=N\DeltaB^{\alpha}(\rho\abs{x}^2)+\beta\DeltaB^{\alpha}\left(\dfrac{\rho\abs{x}}{d}\right)\\
\dfrac{1}{d^{\alpha-1}\rho^{N}}\DeltaB^{\alpha}\left(\dfrac{\nabla\cdot(d^{\beta}\rho^{N}x)}{d^{\beta}\rho^{N}}\right)
&=Nd\left(NA+(N+\alpha)B\right)+\dfrac{\beta}{\rho\abs{x}}\left((NA+(N-1+\alpha)B)A+B^2-1\right)\\
&=(NA+(N-1+\alpha)B)\left(Nd+\dfrac{\beta}{\rho\abs{x}}A\right)+NdB+\dfrac{\beta}{\rho\abs{x}}(B^2-1).
\end{split}\]
Since $N-1+\alpha>1>0$, $A\geq0$ and $B^2-1\geq0$, we may use $\beta>-N$ to finally yield
\[\begin{split}
\dfrac{\rho\abs{x}}{Nd^{\alpha-1}\rho^{N}}\DeltaB^{\alpha}\left(\dfrac{\nabla\cdot(d^{\beta}\rho^{N}x)}{d^{\beta}\rho^{N}}\right)
&\geq(NA+(N-1+\alpha)B)(d\rho\abs{x}-A)+d\rho\abs{x}B-(B^2-1)\\
&\geq0,
\end{split}\]
in view of (1).
\end{enumerate}
This completes the proof.
\end{proof}

\noindent

\end{document}